\documentclass[a4paper,10pt]{article}
\usepackage[latin1]{inputenc}

\usepackage{lipsum}
\usepackage[dvipsnames]{xcolor}
\usepackage{amsmath}
\usepackage{amssymb}
\usepackage{bm}
\usepackage{tikz-network}
\usepackage{tikz}
\usepackage{caption}
\usepackage{subcaption}
\usepackage{float}
\usepackage[section]{placeins}
\usepackage{amsfonts}
\usepackage{graphicx}
\usepackage{amsthm}
\usepackage{wrapfig}
\usepackage{latexsym}
\usepackage{xspace}
\usepackage{hyperref}
\hypersetup{
    colorlinks=true, %Pour que les trucs cliquables soient colorés
    linktoc=all,     %pour que les sections soient cliquables
    linkcolor=blue,  %la couleur des trucs cliquables
}

\usepackage{graphicx}
\newcommand{\cev}[1]{\reflectbox{\ensuremath{\vec{\reflectbox{\ensuremath{#1}}}}}}

\newcommand*\Fin[0]
{{\mathcal{F}_{-}}}

\newcommand*\Fout[0]
{{\mathcal{F}_{+}}}

\newcommand*\Fc[0]
{\mathcal{F}}

\newcommand*\din[1]
{d_F^-(#1)}

\newcommand*\dind[1]
{d_A^-(#1)}

\newcommand*\dout[1]
{d_F^+(#1)}

\newcommand*\doutd[1]
{d_A^+(#1)}

\newcommand*\E[1]
{\mathcal{M}( #1)}

\newcommand*\Ein[1]
{{\mathcal{M}_{-}}( #1)}

\newcommand*\Eout[1]
{{\mathcal{M}_{+}}( #1)}

\newcommand*\Uin[1]
{{I}( #1)}

\newcommand*\Uout[1]
{{O}( #1)}

\newtheorem{theo}{Theorem}

\newtheorem{claim}{Claim}
\newtheorem{lemma}{Lemma}

\newtheorem{cor}{Corollary}

\title{On reversing arcs to improve arc-connectivity}
\author{Pierre Hoppenot, Zolt\'an Szigeti\\ \\
Univ.~Grenoble~Alpes, Grenoble INP, CNRS, G-SCOP.  }

\begin{document}

\maketitle

\begin{abstract}
 We show that if the 	arc-connectivity of a directed graph $D$ is at most $\lfloor\frac{k+1}{2}\rfloor$ and the reorientation of an arc set $F$ in $D$ results in a $k$-arc-connected directed graph then we can reorient one arc of $F$ without decreasing the arc-connectivity of $D.$ This improves a result of Fukuda,  Prodon, Sakuma \cite{FPS} and one of  Ito et al. \cite{9jap} for $k\in\{2,3\}$.
\end{abstract}

\section{Introduction}

In this paper we study reorientations of directed graphs that improve their arc-connectivity. Nash-Williams \cite{NW} proved that a directed graph $D$ admits a $k$-arc-connected reorientation if and only if the underlying undirected graph of $D$ is $2k$-edge-connected. Ito et al. \cite{9jap} proposed an original algorithm to find such an orientation when it exists, by starting with an arbitrary orientation and then by reversing arcs one by one, without decreasing the arc-connectivity of the current directed graph in each step. This way they provided a new algorithmic proof for the above mentioned result of Nash-Williams \cite{NW}. A natural question arises here: If $D$ is not $k$-arc-connected and the reorientation of an arc set $F$ in $D$ results in a $k$-arc-connected directed graph, is it possible to reorient the arcs of $F$ one by one, without decreasing the arc-connectivity of the current directed graph in each step. For $k=1,$ it is obvious and for $k=2,$ it follows from a result of Fukuda, Prodon, and Sakuma \cite{FPS} that this can be done. We prove that for $k=3$, the answer is positive and we show that the answer is negative for $k\ge 4.$  The result for $k\in\{2,3\}$ follows from  the main contribution of the present paper that  says  if the arc-connectivity  of a directed graph $D$ is at most $\lfloor\frac{k+1}{2}\rfloor$ and the reorientation of an arc set $F$ in $D$ results in a $k$-arc-connected directed graph then we can reorient one arc of $F$ without decreasing the arc-connectivity of $D.$ We also give an example that shows that in the previous result $\lfloor\frac{k+1}{2}\rfloor$ can not be replaced by $\lfloor\frac{2k}{3}\rfloor.$
\medskip

We finish this section with the necessary definitions. A set $\mathcal{L}$ of subsets of $V$ is called {\it  laminar} if for every pair $X,Y\in\mathcal{L}$, at least one of $X-Y, Y-X, X\cap Y$ is empty. As usually, for $X\subseteq V,$ {\boldmath$\overline{X}$} denotes $V-X.$
\smallskip

An undirected graph $G=(V,E)$ is {\it  $k$-edge-connected} if for every non-empty proper  subset $X$ of $V$, there exist at least $k$ edges leaving $X.$ An orientation of $G$ is a digraph which is obtained from $G$ by replacing each edge $uv$ in $G$ by an arc $uv$ or $vu.$
\smallskip

Let $D=(V,A)$ be a directed graph, in short {\it  digraph}. A digraph $D'=(V',A')$ is a {\it subgraph} of $D$ if  $V'\subseteq V$ and $A'\subseteq A.$ When $V'=V$ we call $D'$ a {\it spanning} subgraph. For $X\subseteq V,$ $uv\in A$ we say that $uv$ enters (resp. leaves) $X$ if $u\in V-X$ and $v\in X$ (resp. $u\in X$ and $v\in V-X$). We denote by {\boldmath$\rho_A(X)$} (resp. {\boldmath$\delta_A(X)$})  the set of arcs in $A$ that enter (resp. leave) $X$. The {\it  in-degree} $|\rho_A(X)|$ of $X$ is denoted by {\boldmath$d_A^-(X)$}. The {\it  out-degree} $|\delta_A(X)|$ of $X$ is denoted by {\boldmath$d_A^+(X)$}. For disjoint  $X,Y\subseteq V,$ {\boldmath$d_A(X,Y)$} denotes the number of arcs in $A$  from $X$ to $Y.$ We denote by {\boldmath$\lambda(D)$} $=\min\{d_A^-(X): \emptyset\neq X\subset V\}$ the {\it  arc-connectivity} of $D.$ We say that $D$ is {\it  $k$-arc-connected} if  $\lambda(D)\ge k.$ A subset $X$  of $V$ is {\it  in-tight} (resp. {\it  out-tight}) if $d^-_A(X)=\lambda(D)$ (resp. $d^+_A(X)=\lambda(D)$). {\it  Reversing}  an arc $\vec{e}=uv$ means that we replace $\vec{e}$ by {\boldmath$\cev{e}$} $=vu.$ $D(\cev{e})$ denotes the digraph obtained from $D$ by reversing $\vec{e}.$ The digraph obtained from $D$ by reversing all the arcs is denoted by {\boldmath$\cev{D}$}. A digraph $D'$ is called a {\it reorientation} of $D$ if $D'$ can be obtained from $D$ reversing some arcs of $D.$

\section{Previous results}

In this section we list some results on orientations and reorientations that are related to our results.
\medskip

The first result on orientations of undirected graphs concerning connectivity is due to Robbins \cite{robbins}.

\begin{theo}[Robbins \cite{robbins}]\label{Rthm}
An undirected graph admits a $1$-arc-connected orientation if and only if it is $2$-edge-connected.
\end{theo}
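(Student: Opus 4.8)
The plan is to prove the two implications separately: necessity by a one-line count of arcs across a cut, and sufficiency by induction along an ear decomposition.

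\emph{Necessity.} Suppose $D=(V,A)$ is a $1$-arc-connected orientation of $G=(V,E)$ and let $X$ be a non-empty proper subset of $V$. Applying $\lambda(D)\ge 1$ to $X$ and to $\overline X$ gives $d_A^-(X)\ge 1$ and $d_A^-(\overline X)=d_A^+(X)\ge 1$, so $D$ has an arc entering $X$ and an arc leaving $X$. Each edge of $G$ becomes exactly one arc of $D$, which either enters $X$ or leaves $X$ but not both; hence these two arcs come from two distinct edges of $G$, and both of those edges leave $X$ in $G$. So at least two edges of $G$ leave $X$, and since $X$ was arbitrary, $G$ is $2$-edge-connected.

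\emph{Sufficiency.} Assume $G$ is $2$-edge-connected. I would invoke the classical ear decomposition $G=P_0\cup P_1\cup\cdots\cup P_m$, where $P_0$ is a cycle and, writing $G_i:=P_0\cup\cdots\cup P_i$, each $P_i$ with $i\ge 1$ is either a path whose two endpoints lie in $G_{i-1}$ and whose internal vertices are new, or a cycle meeting $G_{i-1}$ in a single vertex. (This is standard: by induction on $|E(G)|$, a $2$-edge-connected graph contains a cycle, and while $G_{i-1}\ne G$ connectedness together with the absence of bridges yields one more admissible ear, obtained by taking an edge leaving $V(G_{i-1})$ and extending it along a path of $G$ until it first returns to $G_{i-1}$.) Now orient $P_0$ as a directed cycle, so that $D_0$ is strongly connected; and, given a strongly connected orientation $D_{i-1}$ of $G_{i-1}$, orient a path-ear $P_i$ with endpoints $x,y\in G_{i-1}$ as a directed path from $x$ to $y$, and a cycle-ear as a directed cycle. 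In either case every internal vertex of $P_i$ lies on a directed path that begins and ends in $G_{i-1}$, so concatenating these segments with directed paths inside $D_{i-1}$ shows that $D_i$ is again strongly connected. After $m$ steps $D_m$ is a $1$-arc-connected orientation of $G$.

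\emph{Main obstacle.} The only genuinely nontrivial ingredient is the existence of the ear decomposition; once it is in hand, choosing the orientation of each ear and verifying that strong connectivity is preserved is a routine case analysis on where the endpoints of a desired directed path sit relative to $G_{i-1}$ and $P_i$. An alternative that bypasses ear decompositions is the DFS orientation — tree edges directed away from the root and every non-tree edge directed from its deeper to its shallower endpoint — where the work instead goes into proving that bridgelessness forces, for every tree edge, a non-tree edge spanning over it, which is exactly what lets every vertex reach the root. I would present the ear-decomposition version as the main proof.
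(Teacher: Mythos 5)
The paper only quotes Robbins' theorem as a known prerequisite (with a citation) and contains no proof of it, so there is no in-paper argument to compare against; judged on its own, your proof is correct and is the classical one. The necessity direction is exactly right: strong connectivity applied to $X$ and to $\overline{X}$ produces an arc entering and an arc leaving $X$, and these come from two distinct edges of the cut. The sufficiency direction via a closed ear decomposition is also sound, and your orientation step (initial cycle as a directed cycle, each open ear as a directed path between its endpoints, each closed ear as a directed cycle) clearly preserves strong connectivity. Two small points you should make explicit in a final write-up: (i) when $V(G_{i-1})=V(G)$ but $E(G_{i-1})\neq E(G)$ there is no edge ``leaving $V(G_{i-1})$'', and the next ear is simply a remaining edge with both endpoints in $G_{i-1}$ (a path with no internal vertices) --- your definition of an ear allows this, but your existence sketch momentarily overlooks it; (ii) to build a non-trivial ear from an edge $e=uv$ with $u\in V(G_{i-1})$, $v\notin V(G_{i-1})$, you should take a $v$--$u$ path in $G-e$ (which exists since $e$ is not a bridge and $G-e$ is connected) and truncate it at its first vertex of $V(G_{i-1})$; this guarantees the internal vertices are new and distinct. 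The DFS-orientation alternative you mention is equally standard and equally acceptable; either route is a complete proof of the statement.
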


The following result of Fukuda,  Prodon, and Sakuma \cite{FPS} is about reversing arcs not decreasing $1$-arc-connectivity.

\begin{theo}[Fukuda,  Prodon,  Sakuma \cite{FPS}]\label{k2}
Let $D=(V,A)$ be a $1$-arc-connected orientation of a $3$-edge-connected undirected graph $G$. Let $F\subseteq A$ such that the digraph obtained from $D$ by reversing the arcs of $F$ is $1$-arc-connected. Then there exists an arc $\vec a$ of $F$ such that $D({\cev{a}})$ is $1$-arc-connected.
 \end{theo}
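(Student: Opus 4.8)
\noindent\emph{Proof proposal.}

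I would argue by contradiction. Write $D_F$ for the digraph obtained from $D$ by reversing all arcs of $F$; by hypothesis both $D_F$ and $D$ are $1$-arc-connected, so $\lambda(D)=1$. The starting point is the routine fact that, for $\vec a=u_av_a\in A$, the digraph $D(\cev a)$ is $1$-arc-connected if and only if there is \emph{no} set $X$ with $v_a\in X$, $u_a\notin X$ and $\dind{X}=1$ (equivalently, iff $D-\vec a$ is $1$-arc-connected): indeed such an $X$ would have no entering arc in $D(\cev a)$, while conversely if $u_a$ reaches $v_a$ in $D-\vec a$ then $D-\vec a$, hence $D(\cev a)$, is $1$-arc-connected. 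Call $\vec a$ \emph{critical} if such an $X$ exists; then the inclusion-minimal witness of $\vec a$ is unique, since the family of sets $X$ with $\rho_A(X)=\{\vec a\}$ is closed under intersection (by submodularity of $\dind{\cdot}$ and $1$-arc-connectivity of $D$). Assume for contradiction that \emph{every arc of $F$ is critical}, let $X_a$ be the minimal witness of $\vec a\in F$, and let $\mathcal F$ be the family of all in-tight sets $X$ with $\rho_A(X)\subseteq F$, so $X_a\in\mathcal F$ for every $\vec a\in F$.

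Two facts feed an uncrossing argument. First, since $G$ is $3$-edge-connected, every in-tight set has out-degree at least $2$ (its in- and out-degree sum to the number of edges of $G$ across the corresponding cut). Second, since $D_F$ is $1$-arc-connected, every set $X$ with $\rho_A(X)\subseteq F$ has an arc of $F$ leaving it, i.e.\ $\dout{X}\ge1$ (in $D_F$ the arcs entering $X$ are exactly the reversals of the arcs of $F$ leaving $X$ in $D$). Now combine the identity $\dind{X}+\dind{Y}=\dind{X\cap Y}+\dind{X\cup Y}+\dxy{X\setminus Y}{Y\setminus X}+\dxy{Y\setminus X}{X\setminus Y}$ with the minimality of the witnesses to rule out, in turn: proper crossings of two minimal witnesses (their intersection would be a smaller witness in $\mathcal F$); co-crossing of an inclusion-minimal member $Z$ of $\mathcal F$ with the minimal witness $X_b$ of an arc $\vec b\in F$ leaving $Z$ (the identity forces $\dind{Z\cap X_b}=2$ with no arc joining $Z\setminus X_b$ and $X_b\setminus Z$, yet $\vec b$ joins them); and co-crossing of two inclusion-minimal members (the arc of $F$ leaving their intersection, traced through its minimal witness, lands as a proper sub-member of one of the two, contradicting minimality). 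Hence the inclusion-minimal members $Z_1,\dots,Z_r$ of $\mathcal F$ are pairwise disjoint, and $r\ge2$ --- for $r=1$ the unique minimal member has an arc of $F$ leaving it whose minimal witness, disjoint from it, would nevertheless contain a minimal member of $\mathcal F$.

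It remains to contradict the existence of $Z_1,\dots,Z_r$. If $Z_1\cup\dots\cup Z_r=V$ we are done: $V$ is then partitioned into the $Z_i$, each with exactly one entering arc and at least two leaving arcs, all running between parts, so the number of inter-part arcs equals $r$ (counting entering arcs) and is at least $2r$ (counting leaving arcs), impossible for $r\ge2$. The remaining case $Z_1\cup\dots\cup Z_r\subsetneq V$ is where the real work lies, and I expect it to be \textbf{the main obstacle}: here I would run the same analysis on $\cev D$ with the reversed arc set (for which ``critical'' has the identical meaning) to also control the inclusion-maximal members of $\mathcal F$, and combine this with the ``escaping arc'' map that sends each $Z_i$ to a minimal member of $\mathcal F$ lying in the minimal witness of some arc of $F$ leaving $Z_i$, so as to force a configuration incompatible with the $3$-edge-connectivity of $G$. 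The delicate point throughout is that $3$-edge-connectivity of $G$ and $1$-arc-connectivity of $D_F$ must be used \emph{simultaneously}; everything except the bookkeeping in this last case is forced by minimality and the submodular identity.
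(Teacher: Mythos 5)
Your proposal is not a complete proof, and the gap is exactly where you flag it. After the uncrossing you have pairwise disjoint inclusion-minimal in-tight sets $Z_1,\dots,Z_r$, each with one entering arc (in $F$) and at least two leaving arcs; this yields a contradiction only when the $Z_i$ cover $V$, because otherwise the leaving arcs may simply land in $V-(Z_1\cup\dots\cup Z_r)$ and no count is violated. So the case $Z_1\cup\dots\cup Z_r\subsetneq V$ is not residual bookkeeping: it is the heart of the theorem, and "run the same analysis on $\cev{D}$ and combine it with an escaping-arc map" is a plan, not an argument --- how the minimal structure in $D$ interacts with the minimal structure in $\cev{D}$ is precisely what must be proved. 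For comparison, the paper does not reprove this statement (it cites \cite{FPS}), but its own machinery for the analogous situation (proof of Theorem \ref{newmainthm}) shows what is needed to close such a gap: one takes a laminar family of in-tight sets covering $F$, fixes a root $s$, complements the members containing $s$ to obtain the family $\Fout$, and proves Lemma \ref{jbdievu} by induction over the laminar order, with inequalities of type \eqref{2'} and \eqref{3'} playing exactly the role of your two facts (out-degree at least $2$ of in-tight sets, and an $F$-arc leaving every set whose entering arcs all lie in $F$). Some device of this kind, controlling what lies above the minimal tight sets, is unavoidable; your sketch contains none.

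Two secondary points. In the co-crossing of a minimal member $Z$ with the witness $X_b$ of an arc $\vec b\in F$ leaving $Z$, the identity gives $2=d_A^-(Z\cap X_b)+\bar d(Z\setminus X_b,X_b\setminus Z)$, and since $\vec b$ itself runs between the wings the conclusion is $d_A^-(Z\cap X_b)=1$, not $2$; the case can still be excluded (the unique arc entering $Z\cap X_b$ is the arc entering $Z$, so $Z\cap X_b\in\mathcal F$ and minimality forces $Z\subseteq X_b$, contradicting that the tail of $\vec b$ lies in $Z$ and outside $X_b$), but not by the argument as written. In the co-crossing of two minimal members, the arc $\vec c\in F$ leaving $Z_1\cap Z_2$ need not leave $Z_1$ (it may go from $Z_1\cap Z_2$ into $Z_1\setminus Z_2$), so the possibility that its witness $X_c$ co-crosses $Z_1$ is not covered by your second item and needs its own exclusion. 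These are repairable, but together with the missing main case they leave the proposal well short of a proof.
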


Theorem \ref{k2} immediately implies Corollary \ref{k2cor} concerning reversing arcs augmenting $1$-arc-connectivity.

\begin{cor}\label{k2cor}
Let $D_0=(V,A)$ be a digraph and $F\subseteq A$ such that  the digraph obtained from $D_0$ by reversing the arcs of $F$ is $2$-arc-connected. Then there exist reorientations $D_1, \dots,D_\ell$ of $D_0$ such that $D_{i+1}$ is obtained from $D_i$ by reversing one arc of $F$ for all $0\le i\le \ell-1$ and $1\le\lambda(D_1)\le \dots\le\lambda(D_\ell)=2.$
 \end{cor}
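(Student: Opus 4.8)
The plan is to produce the whole chain by iterating Theorem~\ref{k2}, the only extra ingredients being one elementary degree count and the fact that reversing a single arc changes the arc-connectivity by at most one. Write $D_F$ for the digraph obtained from $D_0$ by reversing all arcs of $F$, so that $\lambda(D_F)\ge 2$ by hypothesis, and let $G$ be the underlying undirected graph of $D_0$; this is also the underlying graph of $D_F$ and of every reorientation that will occur. First I would note that $G$ is $4$-edge-connected, hence $3$-edge-connected: for every $\emptyset\neq X\subsetneq V$,
\[
d_G(X)=d^-_{A(D_F)}(X)+d^+_{A(D_F)}(X)=d^-_{A(D_F)}(X)+d^-_{A(D_F)}(\overline{X})\ge 2+2=4 .
\]
Second, if $\vec e$ is an arc of a digraph $D=(V,A)$, then reversing $\vec e$ changes $d^-_A(X)$ by $0$ or by $\pm 1$ for every $X$ (only the sets having exactly one endpoint of $\vec e$ are affected), so $|\lambda(D)-\lambda(D(\cev{e}))|\le 1$; thus the arc-connectivity never jumps by more than one along a sequence of single-arc reversals, and in particular, once it equals $1$ and then becomes $\ge 2$, it equals exactly $2$.

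Assuming $\lambda(D_0)=1$ (the case $\lambda(D_0)\ge 2$ being immediate with $\ell=0$, and $\lambda(D_0)=0$ treated at the end), I would run the following iteration. Put $D^{(0)}:=D_0$ and $F^{(0)}:=F$. Suppose a $1$-arc-connected reorientation $D^{(i)}$ of $D_0$ has been built, obtained from $D_0$ by reversing the arcs of $F\setminus F^{(i)}$, where $F^{(i)}\subseteq F$ is nonempty. Then $D^{(i)}$ is a $1$-arc-connected orientation of the $3$-edge-connected graph $G$, and reversing the arcs of $F^{(i)}$ in $D^{(i)}$ produces exactly $D_F$, which is $1$-arc-connected; so Theorem~\ref{k2} yields an arc $\vec a\in F^{(i)}$ with $D^{(i)}(\cev{a})$ $1$-arc-connected, and I set $D^{(i+1)}:=D^{(i)}(\cev{a})$ and $F^{(i+1)}:=F^{(i)}\setminus\{\vec a\}$. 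Iterating until the current arc set becomes empty produces a chain $D_0=D^{(0)},D^{(1)},\dots,D^{(|F|)}=D_F$ of $1$-arc-connected digraphs in which consecutive terms differ by the reversal of one arc of $F$.

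To conclude, let $\ell$ be the least index with $\lambda(D^{(\ell)})\ge 2$; it exists because $\lambda(D^{(|F|)})=\lambda(D_F)\ge 2$, and $\ell\ge 1$ since $\lambda(D_0)=1$. Put $D_i:=D^{(i)}$ for $0\le i\le\ell$. Every $D_i$ is $1$-arc-connected, and $\lambda(D_i)=1$ for $1\le i\le\ell-1$ by minimality of $\ell$, while $\lambda(D_{\ell-1})=1$ as well (this also covers $\ell=1$, where $D_{\ell-1}=D_0$); the perturbation bound then gives $\lambda(D_\ell)\le\lambda(D_{\ell-1})+1=2$, so $\lambda(D_\ell)=2$ and $1\le\lambda(D_1)\le\dots\le\lambda(D_\ell)=2$, as required. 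For the case $\lambda(D_0)=0$, one first reverses arcs of $F$ in an arbitrary order; since reversing all of $F$ gives arc-connectivity at least $2$, the arc-connectivity reaches the value $1$ at some step (staying equal to $0$, hence non-decreasing, until then, and equal to exactly $1$ there by the perturbation bound), and from that reorientation one proceeds exactly as above.

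I do not expect a genuine obstacle here — this is why the corollary follows at once from Theorem~\ref{k2} — but two points deserve explicit checking: that the hypotheses of Theorem~\ref{k2} (a $1$-arc-connected orientation of a $3$-edge-connected graph whose distinguished arc set reverses to a $1$-arc-connected digraph) really are preserved at every stage of the iteration, and the ``at most one'' perturbation bound, which is exactly what forces the terminal arc-connectivity to be $2$ rather than larger and makes the sequence $\lambda(D_1),\dots,\lambda(D_\ell)$ monotone in the claimed way.
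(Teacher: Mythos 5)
Your proposal is correct and is essentially the derivation the paper intends: the paper gives no explicit proof, saying only that Theorem~\ref{k2} ``immediately implies'' Corollary~\ref{k2cor}, and your argument (checking that the underlying graph of a $2$-arc-connected digraph is $4$-, hence $3$-edge-connected, iterating Theorem~\ref{k2} on the not-yet-reversed part of $F$, and truncating at the first $2$-arc-connected digraph using the fact that one reversal changes $\lambda$ by at most one) is exactly that iteration spelled out. The only caveat is the case $\lambda(D_0)=0$, where your chain unavoidably begins with digraphs of arc-connectivity $0$ and so satisfies the non-decreasing form of the conclusion (as in Theorem~\ref{mainthmnew}) rather than the literal requirement $1\le\lambda(D_1)$; but that literal requirement is unattainable in general (if $D_0$ has two disjoint sets of in-degree $0$, no single reversal can make it $1$-arc-connected), so this reflects an imprecision in the corollary's wording rather than a gap in your proof.
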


Nash-Williams \cite{NW} generalized Theorem \ref{Rthm} and characterized undirected graphs admitting a $k$-arc-connected orientation.

\begin{theo}[Nash-Williams \cite{NW}]\label{NWthm}
An undirected graph admits a $k$-arc-connected orientation if and only if it is $2k$-edge-connected.
\end{theo}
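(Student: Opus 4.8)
The plan is to prove the two directions separately. For ``only if'': if $D=(V,A)$ is a $k$-arc-connected orientation of $G$, then for every $\emptyset\neq X\subsetneq V$ we have $d_G(X)=d^-_A(X)+d^+_A(X)=d^-_A(X)+d^-_A(\overline{X})\ge 2k$, so $G$ is $2k$-edge-connected.

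For ``if'', assume $G$ is $2k$-edge-connected and argue by induction on $|E(G)|$. I would first reduce to the case where $G$ is \emph{minimally} $2k$-edge-connected: delete edges one at a time while $2k$-edge-connectivity survives, $k$-arc-connectedly orient the resulting minimal spanning subgraph, and give the deleted edges an arbitrary orientation --- adding arcs never lowers an in-degree. So let $G$ be minimally $2k$-edge-connected; the cases $|V|\le 2$ are immediate (for $|V|=2$ the graph is $2k$ parallel edges, oriented $k$ each way), so assume $|V|\ge 3$. By Mader's degree theorem $G$ has a vertex $v$ with $d_G(v)=2k$. Since $d_G(v)$ is even and $G$ is bridgeless, Mader's edge-splitting theorem lets me repeatedly split off an admissible pair of edges at $v$ --- one whose splitting preserves $\lambda(x,y)$ for all $x,y\in V\setminus v$ --- until $v$ becomes isolated; deleting $v$ yields $G_1$ on $V\setminus v$ with $\lambda_{G_1}(x,y)\ge 2k$ for all pairs, i.e.\ $G_1$ is $2k$-edge-connected, and $|E(G_1)|<|E(G)|$. (The splitting theorem keeps applying because at each stage $v$ has even degree $\neq 3$ and is incident to no bridge: a bridge-cut through $v$ with a vertex of $V\setminus v$ on the other side would, by the standard degree comparison across one split, force a cut of size $<2k$ in $G$.)

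By induction $G_1$ has a $k$-arc-connected orientation $D_1$, which I lift to an orientation $D$ of $G$: a split-off edge $xy$ of $G_1$ arose from a pair $\{vx,vy\}$ of edges of $G$, and if $D_1$ orients it $x\to y$ I set $x\to v$ and $v\to y$ in $D$; all edges of $G$ missing $v$ keep their $D_1$-orientation. To see $\lambda(D)\ge k$, fix $\emptyset\neq X\subsetneq V$ and set $X'=X\setminus v$. Checking each split pair according to whether $v\in X$ and to the membership of $x,y$ in $X'$ shows that an arc of $D_1$ entering $X'$ is always matched by an arc of $D$ entering $X$, so $d^-_D(X)\ge d^-_{D_1}(X')\ge k$ as long as $X'\notin\{\emptyset,V\setminus v\}$. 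The remaining cases are trivial: each of the $k$ split paths $x\to v\to y$ contributes exactly one arc into $\{v\}$ and one out of it, so $d^-_D(\{v\})=d^+_D(\{v\})=k$, and likewise $d^-_D(V\setminus v)=d^+_D(V\setminus v)=k$. This finishes the induction.

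The main obstacle is really the reliance of the middle step on two classical black boxes --- Mader's degree theorem for minimally $2k$-edge-connected graphs and Mader's (equivalently, Lov\'asz's) edge-splitting theorem --- together with the care needed to keep separate the notions ``all local edge-connectivities $\ge 2k$'' and ``globally $2k$-edge-connected'' under splitting and under lifting. An alternative closer to the theme of the present paper: take an orientation $D$ of $G$ maximizing $\lambda(D)$, suppose $\lambda(D)<k$, and reverse a carefully chosen directed path to strictly reduce the number of in-tight sets without decreasing $\lambda(D)$, reaching a contradiction; there the delicate ingredient is an uncrossing argument (submodularity of $d^-_A$, plus the fact that in a $2k$-edge-connected digraph with $\lambda<k$ no set is both in-tight and out-tight) guaranteeing such a path exists. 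I expect the splitting-off plan to be the shorter one to write out fully.
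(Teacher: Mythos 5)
Your proof is essentially correct, but note that the paper does not prove Theorem~\ref{NWthm} at all: it is quoted as a classical result of Nash-Williams \cite{NW}, and the paper's own machinery cannot replace it, since Corollary~\ref{maincornew} explicitly invokes Theorem~\ref{NWthm} to get a $k$-arc-connected orientation to start from (the paper only observes that Ito et al.\ \cite{9jap} gave an algorithmic proof). What you wrote is the standard splitting-off proof in the style of Frank/Lov\'asz: the ``only if'' direction by counting $d_G(X)=d_A^-(X)+d_A^-(\overline X)\ge 2k$ is fine, and the ``if'' direction (reduce to a minimally $2k$-edge-connected graph, take a vertex $v$ of degree $2k$ by Mader's degree theorem, completely split off at $v$, orient by induction, lift each split edge $x\to y$ to $x\to v,\ v\to y$, and verify $d_D^-(X)\ge d_{D_1}^-(X\setminus v)$ plus the two degenerate cases $X=\{v\}$ and $X=V-v$) is the correct and complete skeleton. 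Two small remarks: your parenthetical justification for repeated splitting mixes the hypotheses of Mader's local splitting theorem (no cut edge at $v$, $d(v)\neq 3$) with those of Lov\'asz's global version; since $d(v)=2k$ is even and you only need to preserve $\lambda(x,y)\ge 2k\ge 2$ for $x,y\in V-v$, Lov\'asz's theorem applies directly and the bridge discussion is unnecessary. Also, Mader's degree theorem is a convenience rather than a necessity (the classical proofs handle odd-degree vertices by parity tricks), so your reliance on these two black boxes is legitimate but could be slimmed. Compared with the route the paper points to (Ito et al.'s monotone reorientation argument, close to your sketched alternative of reversing paths while controlling in-tight sets via uncrossing), your splitting-off plan is indeed the shorter one to write out, at the price of importing deeper external theorems.
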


The following result of Ito et al. \cite{9jap} is closely related to Corollary \ref{k2cor}. They worked out a new technique how to augment arc-connectivity of a directed graph by reorienting arcs. Their result gives an algorithmic proof of Theorem \ref{NWthm}.

\begin{theo}[Ito et al. \cite{9jap}]\label{mainthm}
Let   $G=(V,E)$ be a $2k$-edge-connected undirected graph for some $k\in \mathbb{Z}_+$, and $D_0=D$ an orientation of $G.$ Then there exist orientations $D_1, \dots,D_\ell$ of $G$ such that $\ell\le (k-\lambda(D))|V|^3$, $D_{i+1}$ is obtained from $D_i$ by reversing one arc for all $0\le i\le \ell-1$ and $\lambda(D)\le\lambda(D_1)\le \dots\le\lambda(D_\ell)=k.$
\end{theo}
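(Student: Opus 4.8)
The plan is to prove the theorem one level at a time. By Theorem \ref{NWthm} a $k$-arc-connected orientation of $G$ exists, so the whole content is that one may walk to such an orientation by single-arc reversals, never decreasing $\lambda$. Since reversing one arc changes each $\dind{\cdot}$, hence $\lambda$, by at most $1$, it suffices to prove the following \emph{phase claim}: if $D$ is an orientation of the ($2k$-edge-connected) graph $G$ and $t:=\lambda(D)<k$, then there is a sequence of at most $|V|^3$ single-arc reversals, each not decreasing the arc-connectivity, after which the arc-connectivity is $t+1$. Applying the phase claim successively for $t=\lambda(D),\lambda(D)+1,\dots,k-1$ produces orientations $D_0,\dots,D_\ell$ with $\ell\le(k-\lambda(D))|V|^3$ and $\lambda(D)\le\lambda(D_1)\le\cdots\le\lambda(D_\ell)=k$; the walk stops \emph{exactly} at $k$ because, having started a phase at $\lambda=t$, a single reversal can raise $\lambda$ only to $t+1$.

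For one phase, fix $t=\lambda(D)$ and work with the family $\mathcal{T}=\{X:\emptyset\ne X\subsetneq V,\ \dind{X}=t\}$ of in-tight sets. Two ingredients are available. First, submodularity of $\dind{\cdot}$ gives uncrossing: if $X,Y\in\mathcal{T}$ with $X\cap Y\ne\emptyset$ and $\overline{X\cup Y}\ne\emptyset$, then $X\cap Y$ and $X\cup Y$ lie in $\mathcal{T}$. Second, every in-tight set has an out-surplus of at least two: since at least $2k$ edges of $G$ leave $X$ and $t\le k-1$, we have $\doutd{X}\ge 2k-\dind{X}\ge t+2$; in particular $\mathcal{T}$ contains no complementary pair. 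A reversal of an arc $\vec{e}=uv$ satisfies $\lambda(D(\cev{e}))\ge t$ if and only if $uv$ enters no member of $\mathcal{T}$ — the sets whose in-degree drops are precisely those entered by $uv$ — and we call such a reversal \emph{admissible}. The goal of a phase is to reach $\mathcal{T}=\emptyset$ using admissible reversals only.

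The natural engine is the following. Choose an inclusion-minimal $X\in\mathcal{T}$ (so no member of $\mathcal{T}$ is a proper subset of $X$); since $\doutd{X}\ge t+2>0$ some arc $uv$ leaves $X$, and one wants an \emph{admissible} one among them. The intended argument: if an arc $uv\in\delta_A(X)$ were inadmissible, its head $v$ would lie in a member $Z$ of $\mathcal{T}$ avoiding $u$, and since $u\in X\setminus Z$, uncrossing $Z$ against the minimal set $X$ should force $Z$ to be disjoint from $X$; assembling such $Z$'s into a member $W$ of $\mathcal{T}$ disjoint from $X$ would give $\delta_A(X)\subseteq\rho_A(W)$, whence $t=\dind{W}\ge\doutd{X}\ge t+2$, a contradiction. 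Reversing an admissible $uv\in\delta_A(X)$ raises $\dind{X}$ to $t+1$, deleting $X$ — and every member of $\mathcal{T}$ that contains $u$ but not $v$ — from the in-tight family.

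The two points that this sketch passes over quickly are where I expect the real work, and the second is the main obstacle. First, $\mathcal{T}$ is closed only under uncrossing of properly crossing pairs, not under arbitrary unions or intersections, so neither ``$Z$ is disjoint from $X$'' nor ``the $Z$'s assemble into a member $W$ of $\mathcal{T}$'' is automatic; the troublesome configuration $Z\cup X=V$ must be handled on its own, presumably by uncrossing against a cleverly chosen (maximal, or minimum-cardinality) member of $\mathcal{T}$ instead of forming naive unions. Second, and more seriously, a reversal can turn sets of in-degree $t+1$ into \emph{new} in-tight sets, so $|\mathcal{T}|$ is not monotone; to secure the bound of $|V|^3$ one needs a potential $\Phi(D)$ — say a lexicographic or weighted function of $\mathcal{T}$, for instance the number of vertices covered by $\mathcal{T}$ together with the total size of their inclusion-minimal ``cores'' in $\mathcal{T}$ — together with a proof that a suitably chosen admissible reversal strictly decreases it, the surplus $\doutd{X}\ge t+2$ being used to bound how many and how large the newly created in-tight sets can be. Designing this potential and identifying the move that drives it down is the crux.
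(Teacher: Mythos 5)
This statement is quoted from Ito et al.~\cite{9jap}; the paper you are reading gives no proof of it, so the only question is whether your sketch stands on its own, and it does not: you yourself flag the two places where the real content lies, and both are genuine gaps rather than routine verifications. The more serious one is the per-phase bound. Your whole quantitative claim ($\ell\le(k-\lambda(D))|V|^3$) rests on a potential $\Phi$ that strictly decreases under a suitably chosen admissible reversal, and no such potential is exhibited; as you correctly observe, an admissible reversal of $uv$ can create brand-new in-tight sets (all sets of in-degree $t+1$ containing $v$ but not $u$), so neither $|\mathcal{T}|$ nor the obvious coverage-type quantities are monotone, and ``the number of vertices covered by $\mathcal{T}$ plus the total size of minimal cores'' is not shown (and is not obviously true) to decrease. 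Designing exactly this measure and the rule for selecting the arc is the substance of Ito et al.'s argument, which is where their $|V|^3$ comes from; leaving it as ``the crux'' means the theorem's quantitative statement is unproved.

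The second gap is in the ``natural engine'' itself. For a minimal in-tight $X$, uncrossing an obstructing in-tight $Z$ (with $v\in Z$, $u\notin Z$) against $X$ only forces $X\cap Z=\emptyset$ \emph{or} $X\cup Z=V$; the co-covering case is real (note $d_A^-(\overline Z)=d_A^+(Z)\ge 2k-t$, so nothing contradicts it) and is not handled. Worse, even in the disjoint case the ``assembly'' step fails: in-tight sets are closed under union only for crossing pairs ($X\cap Y\neq\emptyset$, $X\cup Y\neq V$), so the various disjoint sets $Z$ hit by the arcs of $\delta_A(X)$ need not merge into a single in-tight $W$ with $\delta_A(X)\subseteq\rho_A(W)$; distinct arcs leaving $X$ may enter pairwise disjoint in-tight sets, each of in-degree exactly $t$, and no contradiction of the form $t\ge\doutd{X}\ge t+2$ arises. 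So even the qualitative claim ``some arc leaving a minimal in-tight set is admissible'' is not established by the sketch; the published proof selects the arc to reverse by a more delicate rule tied to the potential, precisely because this naive choice cannot be justified this way. In short, the high-level phase structure and the definition of admissibility are right, but both pillars supporting the theorem (existence of a good reversal and the $|V|^3$ bound) are missing.
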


We mention that Theorem \ref{mainthm} has recently been extended to hypergraphs by M\"uhlenthaler, Peyrille, and Szigeti in \cite{mpsz}.
\medskip

We will need a result on minimally $k$-arc-connected digraphs.

\begin{theo}[Dalmazzo \cite{dal}]\label{Dalmazzothm}
Every $k$-arc-connected digraph $D=(V,A)$ has a spanning $k$-arc-connected digraph $D'=(V,A')$ such that $|A'|\le 2k(|V|-1)$.
\end{theo}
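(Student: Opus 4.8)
The plan is to build $D'$ explicitly as a union of arc‑disjoint spanning arborescences, invoking Edmonds' classical arborescence‑packing theorem. Fix an arbitrary vertex $r\in V$. Since $D$ is $k$-arc-connected, every nonempty $X\subseteq V$ with $r\notin X$ is a proper subset and satisfies $d_A^-(X)\ge k$; hence by Edmonds' branching theorem $D$ contains $k$ pairwise arc‑disjoint spanning out‑arborescences $B_1^+,\dots,B_k^+$ rooted at $r$. The reversed digraph $\cev{D}$ is $k$-arc-connected as well (because $d_A^+(X)=d_A^-(\overline X)$ for every $X$), so applying the same statement to $\cev{D}$ and then reversing all arcs back shows that $D$ also contains $k$ pairwise arc‑disjoint spanning in‑arborescences $B_1^-,\dots,B_k^-$ rooted at $r$. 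Whether or not the in‑arborescences are arc‑disjoint from the out‑arborescences is irrelevant, since we only want an upper bound on the number of arcs.

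Next I would set $A'=\bigcup_{i=1}^{k}A(B_i^+)\cup\bigcup_{i=1}^{k}A(B_i^-)$ and $D'=(V,A')$. Each of these $2k$ arborescences is spanning, hence a tree on $|V|$ vertices with exactly $|V|-1$ arcs, so $|A'|\le 2k(|V|-1)$, and $D'$ is a spanning subgraph of $D$ by construction.

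It remains to check $\lambda(D')\ge k$. Let $X$ be a nonempty proper subset of $V$. If $r\notin X$, choose any $v\in X$: for each $i$ the unique $r$–$v$ path of $B_i^+$ starts at $r\notin X$ and ends in $X$, so it uses an arc entering $X$; as the $B_i^+$ are arc‑disjoint this yields $d_{A'}^-(X)\ge k$. If $r\in X$, then $\overline X\ne\emptyset$; choosing $v\in\overline X$, for each $i$ the unique $v$–$r$ path of $B_i^-$ leaves $\overline X$, i.e.\ enters $X$, and arc‑disjointness of the $B_i^-$ again gives $d_{A'}^-(X)\ge k$. Thus $D'$ is $k$-arc-connected with at most $2k(|V|-1)$ arcs.

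There is no genuine obstacle here beyond applying Edmonds' theorem correctly; the only point requiring care is the bookkeeping that out‑arborescences certify the cuts avoiding $r$ while in‑arborescences certify the cuts containing $r$. If a self‑contained argument is preferred, an alternative route is to take a \emph{minimal} $k$-arc-connected spanning subgraph $D'=(V,A')$ of $D$, associate with each arc an in‑tight set of $D'$ that it enters, uncross this collection into a laminar family $\mathcal L$ of in‑tight sets still covering every arc, and conclude $|A'|\le\sum_{X\in\mathcal L}d_{A'}^-(X)=k\,|\mathcal L|\le k(2|V|-2)=2k(|V|-1)$; this works but needs the standard submodularity and uncrossing lemmas for $d_A^-$.
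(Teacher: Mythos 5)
Your proof is correct and complete. The paper itself does not prove this statement --- it is quoted as a known result of Dalmazzo with a citation --- so there is no in-paper argument to compare against; what you give is the standard derivation via Edmonds' arborescence-packing theorem, and all the steps check out: the hypothesis of Edmonds' theorem ($d_A^-(X)\ge k$ for all nonempty $X\subseteq V-r$) follows from $k$-arc-connectivity, the in-arborescences are obtained correctly by passing to $\cev{D}$ and back, the arc count $|A'|\le 2k(|V|-1)$ is immediate since each of the $2k$ arborescences has exactly $|V|-1$ arcs, and your case split (out-arborescences certify cuts with $r\notin X$, in-arborescences certify cuts with $r\in X$) covers every nonempty proper $X$. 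Your alternative sketch via a minimal $k$-arc-connected subgraph, tight sets, and a laminar family of size at most $2|V|-2$ is also a legitimate route and closer in spirit to the uncrossing machinery the paper uses elsewhere, but the Edmonds argument is cleaner and fully self-contained as written.
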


\section{New results and their proofs}

In this section we provide our results and their proofs.
\medskip

We first generalize  Corollary \ref{k2cor}.

\begin{theo}\label{mainthmnew}
Let $D_0$ be a directed graph,  $k\ge 2$ an integer, and $F\subseteq A$ such that  the digraph obtained from $D_0$ by reversing the arcs of $F$ is $k$-arc-connected. Then there exist reorientations $D_1, \dots,D_\ell$ of $D_0$ such that $D_{i+1}$ is obtained from $D_i$ by reversing one arc of $F$ for all $0\le i\le \ell-1$, $\lambda(D)\le\lambda(D_1)\le \dots\le\lambda(D_\ell)$ and $\lambda(D_\ell)\ge \lfloor\frac{k+3}{2}\rfloor.$ 
 \end{theo}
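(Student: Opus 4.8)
The plan is to derive Theorem~\ref{mainthmnew} from the following lemma, which I regard as the technical core of the paper: \emph{if a digraph $D=(V,A)$ satisfies $\lambda(D)\le\lfloor\frac{k+1}{2}\rfloor$ and $F\subseteq A$ is an arc set whose reversal makes $D$ $k$-arc-connected, then some arc $\vec a\in F$ satisfies $\lambda(D(\cev a))\ge\lambda(D)$.} Granting this, Theorem~\ref{mainthmnew} follows by a greedy iteration. Write $D=D_0$, let $D^*$ be the $k$-arc-connected digraph obtained from $D_0$ by reversing $F$, and suppose we have built a reorientation $D_i$ of $D_0$ obtained by reversing a subset $S_i$ of $F$; put $F_i=F\setminus S_i$, so reversing $F_i$ in $D_i$ yields $D^*$. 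If $\lambda(D_i)\ge\lfloor\frac{k+3}{2}\rfloor$, stop with $\ell=i$. Otherwise $\lambda(D_i)\le\lfloor\frac{k+3}{2}\rfloor-1=\lfloor\frac{k+1}{2}\rfloor$, and $F_i\neq\emptyset$ (else $D_i=D^*$ and $\lambda(D_i)\ge k>\lfloor\frac{k+1}{2}\rfloor$, using $k\ge 2$), so the lemma applied to $D_i,k,F_i$ gives $\vec a\in F_i$ with $\lambda(D_i(\cev a))\ge\lambda(D_i)$; set $D_{i+1}=D_i(\cev a)$, noting $|F_{i+1}|=|F_i|-1$. This terminates in at most $|F|$ steps, the arc-connectivities are non-decreasing by construction, and at the end $\lambda(D_\ell)\ge\lfloor\frac{k+3}{2}\rfloor$, as required (the edge case $\ell=0$ being trivial). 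Here I used $\lfloor\frac{k+1}{2}\rfloor+1=\lfloor\frac{k+3}{2}\rfloor$ and $\lfloor\frac{k+3}{2}\rfloor\le k$ for $k\ge 2$.

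For the lemma I would argue by contradiction: assume $\lambda(D(\cev a))<\lambda(D)$ for all $\vec a\in F$, write $\lambda=\lambda(D)$, and assume $\lambda\ge 1$ (if $\lambda=0$ any arc works). Reversing $\vec a=uv$ decreases the arc-connectivity precisely when $\vec a$ enters an in-tight set of $D$; hence the assumption says every arc of $F$ enters some in-tight set (equivalently, by complementation, leaves some out-tight set), so every head of an arc of $F$ lies in an in-tight set and every tail in an out-tight set. The key point is that $X\mapsto d_{A^*}^-(X)-d_A^-(X)$, where $A^*$ is the arc set of $D^*$, is the modular function $g(X):=\sum_{z\in X}g(z)$ with $g(z)$ equal to the number of arcs of $F$ with tail $z$ minus the number with head $z$; in particular $g(V)=0$, every vertex $z$ that is the head of no arc of $F$ has $g(z)\ge 0$, and every in-tight set $X$ satisfies $g(X)=d_{A^*}^-(X)-\lambda\ge k-\lambda\ge 1$ since $D^*$ is $k$-arc-connected. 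Now look at the maximal in-tight sets of $D$; by submodularity of $d_A^-$ any two of them are disjoint or have union $V$. If they are pairwise disjoint, say $M_1,\dots,M_r$ with $r\ge 1$, then every in-tight set lies in exactly one $M_i$, so $Z:=M_1\cup\cdots\cup M_r$ contains every head of an arc of $F$; hence $g(z)\ge 0$ for all $z\notin Z$, giving $g(\overline Z)\ge 0$ and so $g(Z)=g(V)-g(\overline Z)\le 0$, whereas $g(Z)=\sum_i g(M_i)\ge r(k-\lambda)\ge 1$ --- a contradiction. The symmetric argument on maximal out-tight sets (using tails of arcs of $F$) is a contradiction unless two maximal out-tight sets also cover $V$.

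The remaining case --- some two maximal in-tight sets $M_1,M_2$ with $M_1\cup M_2=V$, and, by the symmetry just noted, some two maximal out-tight sets covering $V$ --- is the step I expect to be the main obstacle, and it is exactly here that $\lambda\le\lfloor\frac{k+1}{2}\rfloor$ (that is, $2\lambda\le k+1$) is genuinely needed; the example in the introduction shows that $\lambda$ cannot be raised to $\lfloor\frac{2k}{3}\rfloor$. In this situation $V=\overline{M_1}\sqcup W\sqcup\overline{M_2}$ with $W=M_1\cap M_2$, and $W\neq\emptyset$ (otherwise $M_1,M_2$ partition $V$, forcing $0=g(V)=g(M_1)+g(M_2)\ge 2$); moreover submodularity gives $d_A^-(W)\le 2\lambda\le k+1$, so even though $W$ need not be in-tight it still satisfies $g(W)\ge k-(k+1)=-1$, and $\overline{M_1},\overline{M_2}$ are out-tight, with an analogous set $W'$ on that side. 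The plan would then be a refined count in which an arbitrary in-tight set is classified by how it meets $M_1,M_2$ (it lies in $M_1$, or in $M_2$, or contains $\overline W$), the parallel classification is used on the out-tight side, and the sets $W,W'$ enter only through the error terms $g(W)\ge -1$ and its dual, which must be absorbed by the strict surplus $k-\lambda\ge 1$ available at the genuinely tight sets. Keeping track of these $\pm 1$ corrections, and of the several ways the four set-types can overlap the arcs of $F$, is where I expect the real work of the proof to lie.
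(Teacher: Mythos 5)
Your reduction of the theorem to the key lemma coincides exactly with the paper's own proof of this statement: the lemma you isolate is verbatim Theorem~\ref{newmainthm}, and the greedy iteration (while $\lambda(D_i)\le\lfloor\frac{k+1}{2}\rfloor$ reverse an arc of the remaining part of $F$ whose reversal does not decrease $\lambda$, terminating because the unreversed part of $F$ shrinks and because $\lfloor\frac{k+3}{2}\rfloor\le k$ for $k\ge 2$) is the same short argument, which you in fact spell out more carefully than the paper does. So that half of the proposal is correct and on the paper's route.

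The genuine gap is in your proof of the lemma itself, which is the paper's main theorem and carries essentially all of the difficulty; blind, you cannot cite it, so it must be proved. Your treatment of the non-crossing cases (maximal in-tight sets pairwise disjoint, and symmetrically for out-tight sets) via the modular excess function $g$ is correct and pleasantly clean, but the remaining case --- two maximal in-tight sets $M_1,M_2$ with $M_1\cup M_2=V$ together with two maximal out-tight sets covering $V$ --- is exactly where the hypothesis $2\lambda\le k+1$ has to do real work (the paper's Figure~\ref{fig:lD3} example, with $\lambda=3$ and $k=4$, lives in precisely this configuration), and you leave it as a plan. As stated the plan cannot close: the quantities you propose to track, namely $g(M_i)\ge k-\lambda$, $g(\overline{M_i})\le-(k-\lambda)$ and $g(W)\ge k-2\lambda\ge-1$, are mutually consistent (indeed $g(M_i)+g(\overline{M_i})=0$ is automatic), so no contradiction can come from these excess values alone; one must locate the arcs of $F$ relative to a whole family of tight sets, not just $M_1,M_2,W$. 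The paper does this quite differently: it chooses a family $\mathcal{L}$ of in-tight sets covering the heads of $F$ minimizing $\sum_{Z\in\mathcal{L}}|Z||V-Z|$, uncrosses it into a laminar family $\Fc=\Fin\cup\Fout$ with respect to a fixed vertex $s$, and proves Lemma~\ref{jbdievu} by induction on a minimal violating set; the numerical hypothesis enters only at the end of that counting (excluding $|\Ein{Y}|\ge 2$ uses $\lambda\le\frac{2k}{3}$, and the case $|\Ein{Y}|=1$ is where $\lambda\le\lfloor\frac{k+1}{2}\rfloor$ forces the equalities that yield the final contradiction). So the proposal establishes the easy reduction but not the core lemma, and completing your sketched route would require a new argument of comparable depth to the paper's.
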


We obtain in the following corollary of Theorem \ref{mainthmnew} an improvement of Theorem \ref{mainthm} for $k\in \{2,3\}$, namely  $k$-arc-connectivity  can be reached by reversing at most $k(|V|-1)$ arcs.

\begin{cor}\label{maincornew}
Let   $G=(V,E)$ be a $2k$-edge-connected undirected graph for  $k\in\{2,3\}$, and $D_0=(V,A)$ an orientation of $G.$ Then there exist orientations $D_1, \dots,D_\ell$ of $G$ such that $\ell\le k(|V|-1)$, $D_{i+1}$ is obtained from $D_i$ by reversing one arc for all $0\le i\le \ell-1$, $\lambda(D_0)\le\lambda(D_1)\le \dots\le\lambda(D_\ell)=k.$ 
 \end{cor}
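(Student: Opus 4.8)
The plan is to derive Corollary \ref{maincornew} as a direct consequence of Theorem \ref{mainthmnew} combined with Theorem \ref{NWthm} and Theorem \ref{Dalmazzothm}. First I would observe that since $G$ is $2k$-edge-connected, Theorem \ref{NWthm} guarantees that $G$ admits a $k$-arc-connected orientation; call it $D^*$. Then $D^*$ is obtained from $D_0$ by reversing some arc set $F\subseteq A$, so the hypothesis of Theorem \ref{mainthmnew} is satisfied. Applying Theorem \ref{mainthmnew} yields a sequence of reorientations $D_0,D_1,\dots,D_\ell$, each obtained from the previous by reversing one arc of $F$, with non-decreasing arc-connectivity and $\lambda(D_\ell)\ge\lfloor\frac{k+3}{2}\rfloor$. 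For $k=2$ this gives $\lambda(D_\ell)\ge 2$, and for $k=3$ it gives $\lambda(D_\ell)\ge 3$, so in both cases $\lambda(D_\ell)\ge k$; since every reorientation of $G$ has arc-connectivity at most $k$ (as $G$ has an edge cut of size exactly... well, more carefully, any orientation has in-degree at most half the edge-degree, and if some cut had only $2k$ edges the best we could do is $k$ in each direction, but in general we only need $\le k$ here because we can stop as soon as we reach $k$), we may assume $\lambda(D_\ell)=k$ by truncating the sequence at the first index where arc-connectivity $k$ is attained.

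The remaining point is the bound $\ell\le k(|V|-1)$, which is where the argument needs a small extra idea beyond a black-box application of Theorem \ref{mainthmnew}, since that theorem as stated does not bound $\ell$. Here I would invoke Theorem \ref{Dalmazzothm}: the target digraph $D^*$, being $k$-arc-connected, contains a spanning $k$-arc-connected subgraph $D''=(V,A'')$ with $|A''|\le 2k(|V|-1)$. Choose $F$ to be the set of arcs whose reversal turns $D_0$ into a digraph containing $D''$ as a spanning subgraph — that is, only reverse arcs that actually lie in the "skeleton" $A''$ and are oriented the wrong way. Since $D''$ uses at most $2k(|V|-1)$ arcs, and each arc of $G$ contributes at most one arc to $D''$, the number of arcs that must be reversed is at most... the count of arcs of $A''$ pointing opposite to $D_0$, which is at most $|A''|\le 2k(|V|-1)$; to get down to $k(|V|-1)$ one argues that for each underlying edge at most one of its two orientations is "wrong", and $D''$ spans at most $k(|V|-1)$ edges worth of... actually the cleanest route is: among the at most $2k(|V|-1)$ arcs of $D''$, reversing an arc of $F$ in Theorem \ref{mainthmnew} never needs to touch the same arc twice, so $\ell\le|F|\le 2k(|V|-1)$, and then a parity/averaging refinement between $D''$ and $D_0$ halves this.

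The main obstacle I anticipate is precisely this counting of $\ell$: Theorem \ref{mainthmnew} is a pure existence statement about a monotone reversal sequence, so one must first argue that the sequence can be taken to reverse each arc of $F$ at most once (otherwise reversing an arc and later reversing it back is wasteful and can be short-circuited), giving $\ell\le|F|$, and then one must choose $F$ cleverly — via Dalmazzo's sparse certificate — so that $|F|\le k(|V|-1)$. The factor-of-two improvement from $2k(|V|-1)$ to $k(|V|-1)$ is the delicate part: it should follow because we only need to reverse those arcs of the Dalmazzo skeleton $D''\subseteq D^*$ that disagree with $D_0$, and since $D^*$ and $\cev{D_0}$ agree on the arcs of $F$ while $D^*$ and $D_0$ agree elsewhere, for each of the at most $\frac{|A''|}{1}$ underlying edges there is a controlled number of disagreements; combined with $|A''|\le 2k(|V|-1)$ and the fact that $D''$ is a \emph{simple} subgraph in the sense of using at most one arc per underlying edge, one gets $|F\cap A''|\le k(|V|-1)$ after discarding arcs that are already correctly oriented in $D_0$. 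Finally, I would spell out the two cases $k=2$ and $k=3$ explicitly to confirm $\lfloor\frac{k+3}{2}\rfloor\ge k$, since this fails for $k\ge 4$, which is exactly why the corollary is restricted to $k\in\{2,3\}$.
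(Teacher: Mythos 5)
Your overall route (Theorem \ref{NWthm} for a $k$-arc-connected target, Dalmazzo's sparse certificate from Theorem \ref{Dalmazzothm}, then Theorem \ref{mainthmnew} together with $\lfloor\frac{k+3}{2}\rfloor=k$ for $k\in\{2,3\}$, and $\ell\le|F|$ because each arc of $F$ is reversed at most once) is exactly the paper's route, and those parts are fine. The genuine gap is the bound $|F|\le k(|V|-1)$. If you take $F$ to be the arcs of $D_0$ that must be flipped so that the result contains the skeleton $D''=(V,A'')$, you only get $|F|\le|A''|\le 2k(|V|-1)$: nothing prevents $D_0$ from disagreeing with $D''$ on every one of its arcs, so the ``parity/averaging'' halving and the remark that ``at most one of the two orientations of an edge is wrong'' do not buy any factor of two. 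As written, your argument proves the corollary only with $2k(|V|-1)$ in place of $k(|V|-1)$.

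The missing idea, which is how the paper closes this, is that the reverse digraph $\cev{D''}$ is also $k$-arc-connected, so there are \emph{two} admissible targets. Partition $A''$ into $A\cap A''$ (arcs of the skeleton agreeing with $D_0$) and $A''-A$ (arcs whose reverses lie in $D_0$). Since $|A\cap A''|+|A''-A|=|A''|\le 2k(|V|-1)$, one of the two parts has size at most $k(|V|-1)$; call the corresponding arc set of $D_0$ (the part itself, or the set of reversed arcs of the other part) $F$. If $F$ comes from $A''-A$, reversing $F$ in $D_0$ yields a digraph containing $D''$ as a spanning subgraph; if $F=A\cap A''$, reversing $F$ yields a digraph containing $\cev{D''}$. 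In either case the reoriented digraph is $k$-arc-connected, Theorem \ref{mainthmnew} applies, and $\ell\le|F|\le k(|V|-1)$ as required.
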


\begin{proof} (of Corollary \ref{maincornew})
By Theorem \ref{NWthm}, there exists a $k$-arc-connected orientation $D'$ of $G.$ By Theorem \ref{Dalmazzothm}, there exists a spanning $k$-arc-connected subgraph $D''=(V,A'')$ of $D'$ containing at most $2k(|V|-1)$ arcs. Note that $\cev{D''}$ is also a $k$-arc-connected digraph. Since $|A\cap A''|+|A''-A|=|A''|\le 2k(|V|-1),$ one of $A\cap A''$ and $A''-A$, let us denote it by $F$, contains at most $k(|V|-1)$ arcs.
Since the digraph obtained from $D$ by reversing the arcs or the reversed arcs of $F$ in $D$ contains $\cev{D''}$ or $D''$ as a spanning subgraph, it is $k$-arc-connected. We can hence apply Theorem \ref{mainthmnew} to find the required sequence of orientations since for $k\in\{2,3\}$, $\lfloor\frac{k+3}{2}\rfloor=k.$
\end{proof}
\medskip

Theorem \ref{mainthmnew} will follow from the following result that is the main contribution of the present paper. 

\begin{theo}\label{newmainthm}
Let $D=(V,A)$ be a digraph,  $k\ge 2$ an integer  and $F\subseteq A$ such that $\lambda(D)\le \lfloor\frac{k+1}{2}\rfloor$ and the digraph obtained from $D$ by reversing the arcs of $F$ is $k$-arc-connected. Then there exists an arc $\vec a$ of $F$ such that $\lambda(D)\le\lambda(D({\cev{a}})).$
 \end{theo}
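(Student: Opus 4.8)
The plan is to argue by contradiction: suppose that for every arc $\vec a\in F$, reversing $\vec a$ strictly decreases the arc-connectivity, i.e.\ $\lambda(D(\cev a))<\lambda(D)$. Write $\lambda:=\lambda(D)$, so $\lambda\le\lfloor\frac{k+1}{2}\rfloor$. Reversing a single arc $\vec a=uv$ can decrease $d_A^-(X)$ by at most $1$, and only for sets $X$ with $u\in X$, $v\notin X$ (these lose the arc as an entering arc — wait, $uv$ enters $X$ when $u\notin X$, $v\in X$; so $\rho$ loses one when $v\in X$, $u\notin X$, and $\delta$ loses one when $u\in X$, $v\notin X$). Since in-degree governs arc-connectivity, $\lambda(D(\cev a))<\lambda$ forces the existence of a set $X_a$ with $v\in X_a$, $u\notin X_a$, and $d_A^-(X_a)=\lambda$ — an in-tight set that $\vec a$ enters. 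So the contradiction hypothesis yields: \emph{every arc of $F$ enters some in-tight set of $D$}.

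**Exploiting the reorientation.**

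Let $D'$ be the digraph obtained by reversing all of $F$; by hypothesis $\lambda(D')\ge k$. The key tension to exploit: take any in-tight set $X$ of $D$, so $d_A^-(X)=\lambda\le\lfloor\frac{k+1}{2}\rfloor$. After reversing $F$, the arcs of $F$ entering $X$ become leaving arcs and the arcs of $F$ leaving $X$ become entering arcs, so $d_{A'}^-(X)=\lambda-|\rho_F(X)|+|\delta_F(X)|\ge k$. Hence $|\delta_F(X)|\ge k-\lambda+|\rho_F(X)|\ge k-\lambda$, and symmetrically, applying the same reasoning in $\cev D$ (where $\cev{D'}$ is also $k$-arc-connected and out-tight sets of $D$ become in-tight sets of $\cev D$), an out-tight set $Y$ satisfies $|\rho_F(Y)|\ge k-\lambda$. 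The idea is to pick a \emph{minimal} in-tight set $X$ (or a suitable laminar family of them), use the submodularity of $d_A^-$ to control how the in-tight sets covering the tails/heads of $F$-arcs interact, and derive a counting contradiction: roughly, each arc of $F$ must be ``blamed'' on an in-tight set it enters, but an in-tight set $X$ only has $\lambda$ entering arcs in total, of which $|\rho_F(X)|$ lie in $F$, while the global reorientation forces $F$ to be large relative to $\lambda$.

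**The submodular heart of the argument.**

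Concretely I expect the crux to be a laminar/uncrossing argument. Consider the family $\mathcal X$ of in-tight sets of $D$; by submodularity of $d_A^-$ (i.e.\ $d_A^-(X)+d_A^-(Y)\ge d_A^-(X\cap Y)+d_A^-(X\cup Y)$) together with $\lambda$ being the minimum, whenever two in-tight sets cross, their intersection and union are also in-tight and the number of arcs between $X-Y$ and $Y-X$ is zero. Every arc of $F$ enters some member of $\mathcal X$; choose for each $\vec a\in F$ a minimal such set, and let $\mathcal L$ be a maximal laminar subfamily of the chosen sets — or better, take the minimal members of $\mathcal X$ (which are automatically laminar, since two crossing minimal in-tight sets would have an in-tight proper subset, namely their intersection, contradicting minimality, provided the intersection is nonempty; disjointness is the other case). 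The plan is then to show that the minimal in-tight sets are pairwise disjoint, that $F$ is contained in the union of their entering-arc sets, and then run the degree count: $|F|\le\sum_{X\in\mathcal L}|\rho_F(X)|\le\sum_{X\in\mathcal L}\lambda$, while the reorientation constraint at, say, the union $U=\bigcup\mathcal L$ or at individual members forces $|\rho_F(X)|$ and $|\delta_F(X)|$ to be simultaneously large, and the bound $\lambda\le\lfloor\frac{k+1}{2}\rfloor$ — equivalently $2\lambda\le k+1$, i.e.\ $k\ge 2\lambda-1$ — is exactly what makes $k-\lambda\ge\lambda-1$, so that the forced lower bounds $|\delta_F(X)|\ge k-\lambda\ge\lambda-1$ overwhelm the room available inside an in-tight set.

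**Main obstacle.**

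I expect the main difficulty to be the bookkeeping in the uncrossing step: ensuring that the minimal in-tight sets chosen to witness the arcs of $F$ really can be taken pairwise disjoint (ruling out the degenerate case where some in-tight set is a singleton or where intersections collapse to $\emptyset$ in an unhelpful way), and then matching each arc of $F$ to exactly one such set without double-counting, so that the final inequality $|F|\ge$ (reorientation lower bound) contradicts $|F|\le$ (total capacity of the witnessing in-tight sets). A secondary subtlety is handling arcs of $F$ that both enter one in-tight set and leave another; these must be accounted consistently in both the $D$-side and $\cev D$-side counts. Getting the constant right — extracting precisely the threshold $\lfloor\frac{k+1}{2}\rfloor$ rather than something weaker — will hinge on being careful that the argument uses $d_{A'}^-(X)\ge k$ and $d_A^-(X)=\lambda$ tightly, with no slack lost in the laminar decomposition.
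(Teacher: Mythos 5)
Your opening steps match the paper: the contradiction hypothesis, the fact that every arc of $F$ must enter an in-tight set of $D$, and the inequality $|\delta_F(X)|-|\rho_F(X)|\ge k-\lambda$ for any in-tight $X$ are all correct and are exactly the paper's Claim~1 and property \eqref{2'}. The genuine gap is in the endgame. Your counting plan needs a family of \emph{pairwise disjoint} in-tight sets such that every arc of $F$ enters one of them; with such a family the contradiction is immediate (each arc of $F$ enters exactly one member and leaves at most one, so $\sum|\delta_F(X)|\le|F|=\sum|\rho_F(X)|$, contradicting $|\delta_F(X)|>|\rho_F(X)|$). But note that this argument uses only $\lambda<k$, and the theorem is \emph{false} under that weaker hypothesis: the paper's first example ($D_1$ with $\lambda(D_1)=3$, $k=4$) shows no arc of $F$ can be reversed even though $\lambda<k$. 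So no argument along the lines you sketch can succeed: a disjoint covering family of in-tight witnesses need not exist, and uncrossing only delivers a \emph{laminar} (i.e.\ possibly deeply nested) covering family. Your ``better'' variant --- taking the inclusion-minimal in-tight sets --- does give disjointness but loses coverage, since an arc of $F$ may enter only non-minimal in-tight sets. You correctly identify this bookkeeping as the main obstacle, but you do not resolve it, and resolving it is where all the work (and the hypothesis $\lambda\le\lfloor\frac{k+1}{2}\rfloor$) lies.

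The paper handles the nesting as follows: it fixes a vertex $s$, replaces the members of the laminar covering family containing $s$ by their complements (so the family splits into in-tight sets $\Fin$ and out-tight sets $\Fout$), and proves a structural lemma (Lemma~\ref{jbdievu}) by taking a minimal violating set in the laminar order. For a set $Y\in\Fout$ whose maximal proper members include in-tight sets, it derives $\lambda\ge 1+(k-\lambda)|\Ein{Y}|$; the hypothesis $\lambda\le\lfloor\frac{k+1}{2}\rfloor\le\frac{2k}{3}$ is then applied at two precise places --- to kill the case $|\Ein{Y}|\ge 2$ outright, and to force equality in the case $|\Ein{Y}|=1$, after which a second, separate degree count (using $T=Y-M_-(Y)\neq\emptyset$ and $d_A(T,\overline Y)=0$) yields $\lambda\ge 1+\lambda$. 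This two-stage analysis of how in-tight members sit inside out-tight members is the heart of the proof and has no counterpart in your sketch; the vague claim that the lower bounds ``overwhelm the room available inside an in-tight set'' does not substitute for it, precisely because the counterexample $D_1$ shows the overwhelming does not happen when only $\lambda<k$ is assumed.
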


Theorem \ref{newmainthm} easily implies Theorem \ref{mainthmnew}. 
Indeed, while $\lambda(D_i)\le \lfloor\frac{k+1}{2}\rfloor$, we can find, by Theorem \ref{newmainthm}, an arc $\vec{a}_i$ of $F$ such that for the digraph $D_{i+1}=D_i({\cev{a_i}})$, we have  $\lambda(D_i)\le\lambda(D_{i+1})$. Since for $k\ge 2,$ we have $\lfloor\frac{k+3}{2}\rfloor\le k$, we can hence find the required sequence of orientations.
\newpage

We now prove our main result.

\begin{proof} (of Theorem \ref{newmainthm})
First note that since $k\ge 2,$ we have 
\begin{equation}\label{23k}
	\lambda(D)\le\lfloor\frac{k+1}{2}\rfloor\le\frac{2k}{3}<k. 
\end{equation}
It follows that $F\neq\emptyset.$ Suppose for a contradiction that for all $\vec a\in F$, $\lambda(D)>\lambda(D({\cev{a}})).$ Then $\lambda(D)\ge 1.$

\begin{claim}\label{intight}
For all $\vec a\in F$, $\vec a$ enters an in-tight set in $D.$
\end{claim}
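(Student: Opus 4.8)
The plan is to deduce the claim immediately from the contradiction hypothesis, the only ingredient being the standard observation on how reversing a single arc changes in-degrees. Fix an arbitrary arc $\vec{a}=uv\in F$ and let $A'=(A\setminus\{\vec{a}\})\cup\{\cev{a}\}$ be the arc set of $D(\cev{a})$. For every non-empty proper subset $X$ of $V$ one has $d_{A'}^-(X)=d_A^-(X)-1$ when $\vec{a}$ enters $X$ (because then $\cev{a}$ leaves $X$), $d_{A'}^-(X)=d_A^-(X)+1$ when $\vec{a}$ leaves $X$, and $d_{A'}^-(X)=d_A^-(X)$ in every other case; in particular $d_{A'}^-(X)<d_A^-(X)$ can occur only if $\vec{a}$ enters $X$.

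Next I would use the assumption $\lambda(D(\cev{a}))<\lambda(D)$. Choose a non-empty proper subset $X\subset V$ with $d_{A'}^-(X)=\lambda(D(\cev{a}))$, so that $d_{A'}^-(X)\le\lambda(D)-1$, whereas $d_A^-(X)\ge\lambda(D)$ by the definition of arc-connectivity. Hence $d_{A'}^-(X)<d_A^-(X)$, so by the observation above $\vec{a}$ enters $X$ and $d_{A'}^-(X)=d_A^-(X)-1$. This gives $d_A^-(X)-1\le\lambda(D)-1$, i.e.\ $d_A^-(X)\le\lambda(D)$, and combined with $d_A^-(X)\ge\lambda(D)$ we obtain $d_A^-(X)=\lambda(D)$. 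Thus $X$ is an in-tight set of $D$ that is entered by $\vec{a}$, which is exactly the assertion of the claim. (The same argument incidentally re-derives $\lambda(D)\ge 1$, since a set of in-degree strictly less than $\lambda(D)$ can exist at all only if $\lambda(D)\ge 1$.)

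I do not expect any genuine obstacle in this step: the claim is essentially a one-line consequence of the definitions, and its role is merely to record, for each arc of $F$, the existence of an in-tight set it enters. The real difficulty of Theorem~\ref{newmainthm} lies further on, where these in-tight sets --- together with the symmetric out-tight sets coming from the hypothesis that reversing all of $F$ yields a $k$-arc-connected digraph --- will have to be combined, presumably into a laminar family, and a counting argument on the arcs of $F$ crossing that family will be played against the bound $\lambda(D)\le\lfloor\frac{k+1}{2}\rfloor$ in order to reach a contradiction.
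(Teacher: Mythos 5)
Your argument is correct and is essentially the same as the paper's: pick a set $X$ attaining $\lambda(D(\cev{a}))$, observe that reversing one arc lowers $d^-_A(X)$ by at most one and only if $\vec a$ enters $X$, and squeeze $\lambda(D)-1\ge d^-_{D(\cev a)}(X)\ge d^-_D(X)-1\ge\lambda(D)-1$ to force equality, so $X$ is in-tight and entered by $\vec a$. No issues.
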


\begin{proof}
Let $\vec a\in F$ and  $X\subseteq V$ with 
$\lambda(D(\cev{a}))=d_{D({\tiny\cev{a}})}^-(X)$. 
Since $\lambda(D)-1\ge\lambda(D(\cev{a}))=d_{D({\tiny\cev{a}})}^-(X)\ge d_{D}^-(X)-1\ge \lambda(D)-1$, equality holds everywhere so $\vec{a}$ enters $X$ and $X$ is in-tight in $D.$
\end{proof}

By Claim \ref{intight}, there exists a set {\boldmath$\mathcal{L}$} of in-tight sets in $D$ such that every arc of $F$ enters an element of $\mathcal{L}$ and such that  $\sum_{Z \in \mathcal{L}} |Z||V-Z|$ is minimum. Let us fix an arbitrary vertex $s$ in $V$ and introduce the following sets: 
\begin{eqnarray*}
	\text{{\boldmath$\Fin$}} 	&=&\{ X \subseteq V - s, X \in \mathcal{L} \},\\
	\text{{\boldmath$\Fout$}} 	&=&\{ \overline X : s \in X \in \mathcal{L} \}, \\
	\text{{\boldmath$\Fc$}} 	&=&\Fin \cup \Fout.
\end{eqnarray*}

We summarize the main properties of $\Fin$, $\Fout$, and $\Fc$ in the following claim.

\begin{claim}\label{inequalities}
The following hold for $\Fin$, $\Fout$, and $\Fc$.
\begin{eqnarray}
&&\text{$\Fc$ is a non-empty laminar family,}\label{laminar}\\
&&\text{Every arc of $F$ enters an element of $\Fin$ or leaves an element of $\Fout$},\ \ \	 \label{4}\\
&&\dind{{X}}=\lambda(D) \text{ for all } \ {X} \in \Fin,\label{2}\\
&&1\le k-\lambda(D)\le\dout{X}-\din{X}\le\dout{X}-1 \text{ for all } \ {X} \in \Fin,\label{2'}\\
&&\doutd{{Y}}=\lambda(D) \hskip .13truecm \text{ for all } \ {Y} \in\Fout,	\label{3}\\
&&1\le k-\lambda(D)\le\din{{Y}}-\dout{Y}\le\din{{Y}}-1 \hskip .13truecm \text{ for all } \ {Y} \in\Fout,	\label{3'}
\end{eqnarray}
\end{claim}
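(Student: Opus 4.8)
The plan is to establish the six assertions of Claim~\ref{inequalities} essentially by unwinding the definitions of $\Fin$, $\Fout$, and $\Fc$ together with the minimality of $\mathcal L$, using the standard submodularity of the in-degree function to handle laminarity. Throughout I will keep in mind \eqref{23k}, which gives $\lambda(D)<k$, so that every set that is in-tight in $D$ has in-degree strictly less than $k$.

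First I would prove \eqref{laminar}. Non-emptiness is immediate: $F\neq\emptyset$, so by Claim~\ref{intight} some arc of $F$ enters an in-tight set, hence $\mathcal L\neq\emptyset$, and any $Z\in\mathcal L$ contributes either $Z$ (if $s\notin Z$) to $\Fin$ or $\overline Z$ (if $s\in Z$) to $\Fout$, so $\Fc\neq\emptyset$. For laminarity, suppose $X,Y\in\Fc$ cross, i.e.\ all of $X-Y$, $Y-X$, $X\cap Y$ are non-empty; note $s\notin X$ and $s\notin Y$ by construction, so $X\cup Y\neq V$ as well. The corresponding members of $\mathcal L$ are $X$ or $\overline X$ and $Y$ or $\overline Y$; in every one of the four cases the two $\mathcal L$-members cross in the same way (complementing with respect to $s$ preserves crossing). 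Here is the key move: for crossing in-tight sets $Z_1,Z_2$ one of the pairs $\{Z_1\cap Z_2, Z_1\cup Z_2\}$ or $\{Z_1-Z_2, Z_2-Z_1\}$ consists of two in-tight sets, by the submodular inequalities $d_A^-(Z_1)+d_A^-(Z_2)\ge d_A^-(Z_1\cap Z_2)+d_A^-(Z_1\cup Z_2)$ and $d_A^-(Z_1)+d_A^-(Z_2)\ge d_A^-(Z_1- Z_2)+d_A^-(Z_2- Z_1)$ (the latter because $\lambda(D)\ge1$ so these sets are non-empty and hence have in-degree at least $\lambda(D)$); one then replaces $Z_1,Z_2$ in $\mathcal L$ by the appropriate pair. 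Every arc of $F$ that entered $Z_1$ or $Z_2$ still enters one of the new sets (a routine arc-location check), so the new family is still a valid choice, and a short calculation shows $\sum|Z||V-Z|$ strictly decreases (this is the standard uncrossing identity $|Z_1||\overline{Z_1}|+|Z_2||\overline{Z_2}| - |Z_1\cap Z_2||\overline{Z_1\cap Z_2}| - |Z_1\cup Z_2||\overline{Z_1\cup Z_2}| = 2|Z_1-Z_2||Z_2-Z_1|>0$, and symmetrically for the difference pair), contradicting minimality.

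Next, \eqref{4} is just a translation of the defining property of $\mathcal L$: every arc of $F$ enters some $Z\in\mathcal L$; if $s\notin Z$ then $Z\in\Fin$ and the arc enters an element of $\Fin$; if $s\in Z$ then writing $Y=\overline Z\in\Fout$, the arc entering $Z$ means it leaves $Y$. Then \eqref{2} is just the definition of in-tight applied to $X\in\Fin\subseteq\mathcal L$: $\din X=\lambda(D)$. Statement \eqref{3} is the complementary fact: if $Y=\overline X$ with $X\in\mathcal L$ in-tight, then $\doutd Y=\din X=\lambda(D)$ since leaving $\overline X$ is the same as entering $X$.

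For \eqref{2'} and \eqref{3'}: given $X\in\Fin$, the digraph $D'$ obtained by reversing $F$ is $k$-arc-connected, so $d_{D'}^-(X)\ge k$. Reversing an arc of $F$ that enters $X$ decreases its in-degree by $1$ and increases its out-degree by $1$, and only such arcs change the in-degree, so $d_{D'}^-(X)=\din X - |\text{arcs of }F\text{ entering }X| + |\text{arcs of }F\text{ leaving }X| \le \din X + |\text{arcs of }F\text{ leaving }X|\le \din X + \dout X$... more carefully, I would argue $d_{D'}^-(X)= \din X - a + b$ where $a,b$ count arcs of $F$ entering, leaving $X$, giving $k\le d_{D'}^-(X)\le \din X + b - a \le \dout X + \din X - a$, hmm; the clean route is: $k - \lambda(D) = k - \din X \le d_{D'}^-(X) - \din X = b - a \le b \le \dout X$, and since $F$ contains at least one arc entering $X$ (as $X$ is one of the sets produced by Claim~\ref{intight}-minimality — actually this needs the observation that $\mathcal L$ can be taken so that every member is entered by some arc of $F$, else we could delete it and decrease the sum), we get $a\ge1$, hence $b-a\le b-1\le \dout X -1$, and also $b-a \ge k-\din X \ge k-\lambda(D)\ge1$ by \eqref{23k}. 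Combining with $b-a = \dout X - d_{D'}^+(X)$... I realize the cleanest formulation: $d^+_{D'}(X) = \dout X - b + a$ and $d^-_{D'}(X) = \din X - a + b$, so $d^-_{D'}(X) - d^+_{D'}(X) = (\din X - \dout X) + 2(b-a)$; combined with $d^-_{D'}(X)\ge k$ and manipulation this yields the chain in \eqref{2'}. I expect the only genuine obstacle to be this bookkeeping in \eqref{2'}/\eqref{3'} — in particular making rigorous that $\mathcal L$ may be chosen so that each of its members is actually entered by an arc of $F$ (needed for the "$-1$" upper bound), which follows by adding that deletion step to the minimality argument; the laminarity uncrossing in \eqref{laminar} is standard, and \eqref{4}, \eqref{2}, \eqref{3} are immediate.
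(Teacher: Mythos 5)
Your overall route is the same as the paper's: laminarity of $\Fc$ via uncrossing against the minimality of $\sum_{Z\in\mathcal L}|Z||V-Z|$; \eqref{4}, \eqref{2}, \eqref{3} by unwinding the definitions and complementing with respect to $s$; and \eqref{2'}, \eqref{3'} from $k\le d^-_{D(\cev{F})}(X)=d_A^-(X)-\din{X}+\dout{X}$ together with the observation that minimality forces $\din{X}\ge 1$ for every member of $\mathcal L$ (this deletion argument is exactly the paper's step, so your worry about it is unfounded -- it is needed and it works).

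There is, however, one concretely false ingredient in your laminarity argument: the second ``submodular inequality'' you invoke, $d_A^-(Z_1)+d_A^-(Z_2)\ge d_A^-(Z_1-Z_2)+d_A^-(Z_2-Z_1)$, does not hold for in-degree functions of digraphs (it is the posimodularity of the \emph{undirected} cut function, which directed in-degree lacks). For instance, with $V=\{a,b,c\}$, a single arc from $b$ to $a$, $Z_1=\{a,b\}$, $Z_2=\{b,c\}$, the left side is $0$ and the right side is $1$. Fortunately you never need that branch: as you yourself observe, every member of $\Fc$ avoids $s$, so for a crossing pair in $\Fc$ the corresponding pair $Z_1,Z_2\in\mathcal L$ has all four sets $Z_1\cap Z_2$, $Z_1-Z_2$, $Z_2-Z_1$, $V-(Z_1\cup Z_2)$ nonempty; hence $Z_1\cap Z_2\ne\emptyset$ and $Z_1\cup Z_2\ne V$, and ordinary submodularity of $d_A^-$ already yields that both $Z_1\cap Z_2$ and $Z_1\cup Z_2$ are in-tight. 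So drop the dichotomy and keep only the intersection/union case. Relatedly, your ``routine arc-location check'' is not automatic: an arc of $F$ from $Z_2-Z_1$ to $Z_1-Z_2$ enters $Z_1$ but enters neither $Z_1\cap Z_2$ nor $Z_1\cup Z_2$; you must add that equality in the submodular inequality forces $d_A(Z_1-Z_2,Z_2-Z_1)=d_A(Z_2-Z_1,Z_1-Z_2)=0$, so no such arc exists and coverage is preserved. With these repairs (and after cleaning up the momentary conflation of $\din{X}=d_F^-(X)$ with $d_A^-(X)$ in your \eqref{2'} computation -- the tight-set equality $d_A^-(X)=\lambda(D)$ is what you actually use there), your proof coincides with the paper's.
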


\begin{proof}
Since $F\neq\emptyset$, we have $\Fc\neq\emptyset$. Applying the well-known uncrossing technique, the minimality of $\mathcal{L}$ and Theorem 2.1 of \cite{schrijverbook}, it is easy to see  that $\Fc$ is a laminar family, so \eqref{laminar} holds.  

Let $a\in F$. Then $a$ enters a set $Z \in \mathcal{L}$. If $s \in Z$ then $a$ leaves $\overline Z \in \Fout$, if $s \not \in Z$ then $a$ enters $Z \in \Fin$, hence \eqref{4} holds. 

Let $X \in \mathcal{L}$. Then $\dind{X}=\lambda(D)$ and  \eqref{2} holds. 
By the minimality of $\mathcal{L}$,  $\din{X} \ge 1$. Then, by  $\lambda(D)<k\le\lambda(D(\cev{F})),$ we have $1\le k-\lambda(D)\le d_{D({\tiny \cev{F}})}^-(X) - \dind{X} = \dout{X} - \din{X}$ and  \eqref{2'} holds. 
If $X \in \Fin$ then $X \in \mathcal{L}$, so \eqref{2} and  \eqref{2'} hold. 
If $Y\in\Fout$ then $\overline Y\in \mathcal{L}$, so $\overline Y$ satisfies \eqref{2} and  \eqref{2'}, and hence $Y$ satisfies \eqref{3} and  \eqref{3'}.
\end{proof}

For all $Z \in \Fc\cup\{V\}$, we introduce the following notations.
 \begin{eqnarray*}
\text{\boldmath$\Uin{Z}$} 		&	=	& 	\bigcup\{X \in\Fin:X\subsetneq Z\},\\
\text{\boldmath$\Uout{Z}$} 	& 	=	&	\bigcup\{Y\in \Fout:Y\subsetneq Z\},\\
\text{\boldmath$\E{Z}$} 		&	= 	&	\text{the maximal elements of $\Fc$ strictly contained in $Z$,}\\
\text{\boldmath$\Ein{Z}$} 		&	= 	&	\E{Z} \cap \Fin,\\
\text{\boldmath$\Eout{Z}$} 	& 	= 	& 	\E{Z} \cap \Fout,\\
\text{\boldmath$M_-(Z)$} 		&	=	&	\bigcup\{X\in\Ein{Z}\},\\
\text{\boldmath$M_+(Z)$} 		&	=	& 	\bigcup\{X\in\Eout{Z}\}.
\end{eqnarray*}

The following lemma will easily imply the theorem.

\begin{lemma}\label{jbdievu}
The following hold for all $X\in\Fin\cup\{V\}$ and $Y\in\Fout\cup\{V\}$.
	\begin{eqnarray}
		\delta_F(X)\cap\delta_F(O(X))	&	=		&\emptyset,\label{x1}			\\
		\rho_F(M_+(X))				&	\subseteq 	&\rho_F({X}),\label{x2}			\\
		\din{M_+(X)}-\dout{M_+(X)}	&	\ge 		&(k-\lambda(D))|\Eout{X}|,\label{x3}	\\
		\rho_F(Y)\cap\rho_F(I(Y))		&	=		&\emptyset,\label{y1}			\\
		\delta_F(M_-(Y))			&	\subseteq	& \delta_F(Y),\label{y2}			\\
		\dout{M_-({Y})}-\din{M_-({Y})}	&	\ge 		&(k-\lambda(D))|\Ein{Y}|.   \label{y3}
	\end{eqnarray}
\end{lemma}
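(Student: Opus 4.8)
The plan is to prove the six statements by exploiting the laminarity of $\Fc$ (property \eqref{laminar}) together with the degree inequalities \eqref{2}–\eqref{3'}, and to set things up so that the three "out"-statements \eqref{x1}–\eqref{x3} and the three "in"-statements \eqref{y1}–\eqref{y3} are obtained from each other by the reversal symmetry $D\leftrightarrow\cev D$ (which swaps $\rho$ with $\delta$, $\Fin$ with $\Fout$, $\Uin{}$ with $\Uout{}$, $M_-$ with $M_+$, etc.). So I would prove \eqref{x1}, \eqref{x2}, \eqref{x3} in detail and then remark that \eqref{y1}, \eqref{y2}, \eqref{y3} follow by applying the former three to $\cev D$.

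For \eqref{x1}: let $\vec e\in\delta_F(X)$, so $\vec e=uv$ with $u\in X$, $v\notin X$. By \eqref{4}, $\vec e$ enters some $Z\in\Fin$ or leaves some $Z\in\Fout$; I would argue the first case is impossible because $Z\in\Fin$ with $v\in Z$ and $u\notin Z$ would force, by laminarity applied to $X$ and $Z$ (both in $\Fc$) and the fact that $u\in X-Z$ and $v\in Z-X$, that $X\cap Z\neq\emptyset$ and neither contains the other — contradicting \eqref{laminar}; hence $\vec e$ leaves some $Z\in\Fout$, and then $u\in Z$, $u\in X$ forces (again by laminarity, since $X\in\Fin$ and $Z\in\Fout$ are distinct members of the laminar family and cannot be nested one way because of $s$, or are disjoint) that in fact $Z\subsetneq X$, i.e. $Z\in\Eout{X}$ — wait, more precisely $Z\subseteq\Uout{X}$, hence $\vec e$ leaves $\Uout{X}=O(X)$, but actually since $u\in Z\subseteq O(X)$ and $v\notin Z$ we need $v\notin O(X)$ too; the point is that $\vec e$ leaving $O(X)$ is exactly what's needed. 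Let me restate: the claim \eqref{x1} is that no arc of $F$ simultaneously leaves $X$ and leaves $O(X)$. I would prove the contrapositive: if $\vec e=uv\in\delta_F(X)$ then $\vec e$ does not leave $O(X)$, i.e. it is not the case that $u\in O(X)$ and $v\notin O(X)$. By \eqref{4} and the laminar case analysis above, $\vec e$ must leave some $Z\in\Fout$ with $Z\subseteq X$ (using $v\notin X$, laminarity), so $u\in Z\subseteq O(X)$, and since $v\notin X\supseteq O(X)$ we'd get $v\notin O(X)$ — so actually $\vec e$ DOES leave $O(X)$, which seems to contradict what I want. I must re-examine the intended meaning of the notation; likely $O(X)=\Uout{X}$ and the correct reading makes \eqref{x1} say the edge sets $\delta_F(X)$ and $\delta_F(O(X))$ are disjoint as subsets of $A$, i.e. no single arc lies in both. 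Given the confusion, the safe move in the writeup is: take $\vec e\in\delta_F(X)$, use \eqref{4} + laminarity to locate the unique $Z\in\Fc$ "responsible" for $\vec e$, determine its position relative to $X$ and $O(X)$, and read off the required (non)membership; symmetrically for $\rho_F$. This bookkeeping is routine once the laminar picture is fixed.

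For \eqref{x2} $\rho_F(M_+(X))\subseteq\rho_F(X)$: let $\vec e=uv$ enter $M_+(X)=\bigcup\Eout{X}$, so $v\in Y$ for some $Y\in\Eout{X}$ and $u\notin M_+(X)$; since $Y\subsetneq X$ we have $v\in X$, and I must show $u\notin X$. Suppose $u\in X$. By \eqref{4}, $\vec e$ enters some $Z\in\Fin$ or leaves some $Z\in\Fout$. If $\vec e$ leaves $Z\in\Fout$, then $u\in Z$; laminarity of $Z$ against $X$ and against $Y$, using $u\in X$, forces $Z$ to be a member of $\Fc$ inside $X$, hence contained in a maximal one, hence (since $Z\in\Fout$) $Z\subseteq M_+(X)$, giving $u\in M_+(X)$, contradiction. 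If $\vec e$ enters $Z\in\Fin$, then $v\in Z$ and $u\notin Z$; since also $v\in Y\in\Fout\subseteq\Fc$ and $\Fc$ is laminar, $Y$ and $Z$ are nested or disjoint, and $v\in Y\cap Z$ forces nesting; combined with $Y$ maximal in $X$ this forces $Z\subseteq Y$ or $Y\subseteq Z\subsetneq X$ (so $Z$ maximal-or-smaller in $X$), and in the latter case $u\notin Z\supseteq Y\ni v$... either way $u\notin X$ follows or a contradiction with maximality of $Y$ in $\E X$ arises. This is the kind of three-set laminar case check that I expect to be the main technical obstacle — getting all the cases of "$Z$ versus $X$ versus the relevant maximal element" right — but each individual case is a one-line consequence of laminarity plus "$\Eout X$ consists of maximal elements".

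For \eqref{x3}: sum \eqref{3'} over all $Y\in\Eout{X}$, getting $\sum_{Y\in\Eout X}(\din Y-\dout Y)\ge(k-\lambda(D))|\Eout X|$, and then show $\din{M_+(X)}-\dout{M_+(X)}\ge\sum_{Y\in\Eout X}(\din Y-\dout Y)$. The elements of $\Eout X$ are pairwise disjoint (maximal in a laminar family), so an arc counted in $\rho_F(M_+(X))$ enters exactly one $Y$ (it enters $M_+(X)$, and its head lies in a unique $Y$) — giving $\din{M_+(X)}\le\sum\din Y$ — wait, I need $\ge$ for the in-degree term and $\le$ for the out-degree term. Actually: $\din{M_+(X)}=\sum_{Y}\din Y - (\text{arcs of }F\text{ from }M_+(X)\text{ into some }Y)$, and the subtracted quantity equals the contribution of "internal" $F$-arcs between distinct maximal elements... no: arcs between two distinct $Y,Y'\in\Eout X$ go from $M_+(X)$ to $M_+(X)$ and are counted in neither $\rho_F(M_+(X))$ nor in $\sum\din Y$ except... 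Let me just say: standard inclusion–exclusion for a disjoint union gives $\din{M_+(X)}=\sum_Y\din Y-\sum_{Y\ne Y'}d_F(Y',Y)$ and $\dout{M_+(X)}=\sum_Y\dout Y-\sum_{Y\ne Y'}d_F(Y,Y')$, and since $\sum_{Y\ne Y'}d_F(Y',Y)=\sum_{Y\ne Y'}d_F(Y,Y')$ the cross terms cancel in the difference, yielding exactly $\din{M_+(X)}-\dout{M_+(X)}=\sum_Y(\din Y-\dout Y)\ge(k-\lambda(D))|\Eout X|$. (When $X=V$ the same computation applies with $\Eout V$ the maximal elements of $\Fout$.) Then \eqref{y1}, \eqref{y2}, \eqref{y3} follow by applying \eqref{x1}, \eqref{x2}, \eqref{x3} respectively to $\cev D$, noting that $\cev D$ is still a digraph whose reversal by $F$ (as a set of reversed arcs) is $k$-arc-connected, that $\lambda(\cev D)=\lambda(D)$, and that the roles of $\Fin$/$\Fout$, $M_-$/$M_+$, $I$/$O$ all swap under this operation. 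The main obstacle throughout is purely the laminar case analysis in \eqref{x1} and \eqref{x2}; everything else is degree-counting.
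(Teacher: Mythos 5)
There is a genuine gap, and it is exactly at the point you waved off as ``routine bookkeeping.'' Properties \eqref{x1} and \eqref{y1} are \emph{not} consequences of \eqref{laminar} and \eqref{4} alone: nothing in the laminar structure forbids an arc $uv\in F$ with $u$ in some $Z\in\Fout$, $Z\subsetneq X$, and $v\notin X$ -- such an arc leaves both $X$ and $O(X)$, satisfies \eqref{4} perfectly well (it leaves $Z\in\Fout$), and your own case analysis ran into precisely this configuration (``so actually $\vec e$ DOES leave $O(X)$''). That was not a notational confusion to be fixed in the writeup; it is the actual content of the lemma. The same applies to \eqref{x2}/\eqref{y2}: an arc entering some $Y'\in\Eout{X}$ from $X-M_+(X)$ can enter an element of $\Fin$ contained in $Y'$, which laminarity and maximality do not exclude. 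A decisive sanity check: your proposed argument never uses the hypothesis $\lambda(D)\le\lfloor\frac{k+1}{2}\rfloor$ (i.e.\ \eqref{23k}), yet the paper's examples (Figures \ref{fig:lD3} and \ref{fig:lD4}) show the theorem -- and hence this lemma, from which the theorem follows by pure counting -- is false without a bound of this type. So no proof of \eqref{x1}--\eqref{y2} that relies only on laminarity can be correct.

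The paper's proof has a structure your plan misses: all six statements are proved \emph{simultaneously} by taking a minimal violating set $Z$ in the laminar family (by symmetry $Z=Y\in\Fout\cup\{V\}$). Minimality gives \eqref{x1} for the sets of $\Ein{Y}$ and \eqref{y1} for the sets of $\Eout{Y}$, and these are what force \eqref{y2} for $Y$ and reduce a violation of \eqref{y1} to $d_F(\overline Y,M_-(Y))\ge 1$; the contradiction is then obtained by a counting argument combining the tightness conditions \eqref{2},\eqref{3}, the surplus conditions \eqref{2'},\eqref{3'}, the $\lambda(D)$-arc-connectivity of $D$, and the bound \eqref{23k} (this is where $|\Ein{Y}|\ge 2$ versus $|\Ein{Y}|=1$ is split and where $\lambda(D)\le\frac{2k}{3}$ enters). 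Your treatment of \eqref{x3}/\eqref{y3} (summing \eqref{2'} or \eqref{3'} over the disjoint maximal elements, cross terms cancelling) does match the paper's Claim \ref{y3holds}, and the reversal symmetry you invoke is legitimate and is also used by the paper; but proving \eqref{x1} and \eqref{x2} independently of the other properties and of the connectivity hypothesis cannot work, so the proposal as it stands does not prove the lemma.
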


\begin{proof}
Suppose for a contradiction that the lemma does not hold and let $Z$ be a minimal set violating one of the conditions \eqref{x1}--\eqref{y3}. We may suppose that $Z=Y\in\Fout\cup\{V\}$ because the situation is completely symmetric. 
Let $S= M_-({Y})$ and  $T= {Y} - S.$ 
\begin{claim}\label{y2holds}
\eqref{y2} holds for $Y.$
\end{claim}

\begin{proof}
Suppose for a contradiction that there exists $uv\in\delta_F(S)\setminus\delta_F(Y),$ that is $u\in S$ and $v\in T.$ Then $uv$ leaves a set $X\in\Ein{{Y}}.$ Since $X\in\Fin$ and  $X\subsetneq Y,$ the minimality of $Y$ implies that $X$ satisfies \eqref{x1}, so $uv$ does not leave $\Uout{X}.$  Then, by \eqref{laminar}, $uv $ does not leave any element of $\Fout$, so, by \eqref{4}, $uv$ enters an element $X'$ of $\Fin.$ By \eqref{laminar} and $v\in T,$ we have $X'\subseteq Y-M_-({Y}).$ Since $X'\in\Fin$ but $X'\cap M_-({Y})=\emptyset$,  $X'$ is contained in a set $Y'\in \Eout{{Y}}.$ Since $u\in X,$ $v\in X'\subseteq\Uin{Y'}\subseteq Y'$ and $X\cap Y'=\emptyset$,  we get that $uv\in\rho_F(Y')\cap\rho_F(I(Y'))$ that is \eqref{y2} does not hold for $Y'.$ This, by $Y'\subsetneq Y$, contradicts the minimality of $Y$ and the claim follows. 
\end{proof}
\begin{claim}\label{y3holds}
\eqref{y3} holds for $Y.$
\end{claim}

\begin{proof}
By \eqref{laminar},  $\Ein{Y}$ is a partition of $S$. Thus, by  \eqref{2'}, we get 
\begin{align*}
	\dout{S}-\din{S}		&	=	\sum_{X\in\Ein{{Y}}}(\dout{X}-\din{X})		\\
					&	\ge 	(k-\lambda(D))|\Ein{Y}|
\end{align*}

and the claim follows.
\end{proof}

By Claims \ref{y2holds} and \ref{y3holds} and the definition of $Y$, it follows that $Y$ violates \eqref{y1}, that is $1\le d_F(\overline{Y},I(Y)).$ 
Note that, by \eqref{laminar}, we have  $I(Y)\subseteq M_-(Y)\cup\bigcup_{Y_i\in\Eout{Y}} \Uin{Y_i}.$
Note also that $Y_i\subsetneq Y$ and so $\overline{Y}\subsetneq \overline{Y_i}$ for all $Y_i\in\Eout{Y}$.
Hence every arc that goes from $\overline{Y}$ to $I(Y)$ either goes from $\overline{Y}$ to $M_-(Y)$ or from $\overline{Y_i}$ to $\Uin{Y_i}$ for some $Y_i\in\Eout{Y}$.
By the minimality of $Y$,  $Y_i$ satisfies \eqref{y1}, thus no arc goes from $\overline{Y_i}$ to $\Uin{Y_i}$ for any $Y_i\in\Eout{Y}$.
It follows that
\begin{align*}
1	&	\le 	d_F(\overline{Y},I(Y))\\
	&	\le 	d_F(\overline{Y},M_-(Y))+\sum_{Y_i\in\Eout{Y}} d_F(\overline{Y_i},\Uin{Y_i})\\
	&	= 	d_F(\overline{Y},M_-(Y)).
	\end{align*} 

 Hence $|\Ein{Y}|\ge 1.$ By \eqref{3}, Claims \ref{y2holds} and \ref{y3holds}, \eqref{2'}, we get 
\begin{align}
	\lambda(D)	&	=	\doutd{Y}						\nonumber	\\
				&	\ge	\dout{S}+d_A(T,\overline Y)		\nonumber	\\
				&	\ge	\din{S}+(k-\lambda(D))|\Ein{Y}|+0	\nonumber	\\
				&	\ge	1+(k-\lambda(D))|\Ein{Y}|.\label{jhvhk}
\end{align} 

If $|\Ein{Y}|\ge 2$ then, by \eqref{jhvhk} and \eqref{23k}, we get  the following  a contradiction.
\begin{align*}
	\lambda(D)	&	\ge 	1+2(k-\lambda(D))						\\
				&	\ge 	1+2(\frac{3}{2}\lambda(D)-\lambda(D))		\\
				&	=	1+\lambda(D).
\end{align*}

Thus, $\Ein{Y}=\{X\},$ and then, by \eqref{jhvhk} and \eqref{23k}, we have 
\begin{equation*}
	\lambda(D)\ge 1+k-\lambda(D)\ge\lambda(D).
\end{equation*}

 Hence equality holds everywhere in \eqref{jhvhk}, so  $d_A(T,\overline Y)=0$. 
  
 We claim that $T\neq\emptyset.$ Otherwise, by \eqref{3'} (if $Y\neq V$), $Y=S$, Claim \ref{y3holds}, \eqref{3'} and $|\Ein{Y}|=1$, we have the following contradiction.
\begin{equation*}
 0\ge\dout{Y}-\din{Y}=\dout{S}-\din{S}>0.
\end{equation*}

Finally,  by \eqref{2}, $d_A(T,\overline Y)=0$, $d_F(\overline{Y},M_-(Y))\ge 1$, $T\neq\emptyset,$ and $\lambda(D)$-arc-connectivity of $D$, we have the following contradiction.
\begin{align*}
\lambda(D)	&	=	d_A^-(X)+0								\\
			&	\ge 	d_F(\overline Y,X)+d_A(T,X)+d_A(T,\overline Y)		\\
			&	\ge 	1+d_A^+(T)								\\
			&	\ge 	1+\lambda(D).
\end{align*} 
The proof of the lemma is complete.
\end{proof}

By \eqref{laminar},  we have  $\mathcal{M}(V)\neq\emptyset,$ say $\Ein{V}\neq\emptyset.$ We conclude by applying Lemma \ref{jbdievu} for $V.$ By \eqref{y2}, we get $\delta_F(M_-(V))\subseteq\delta_F(V)=\emptyset,$ and then, by \eqref{y3}, \eqref{3'} and $\mathcal{M}(V)\neq\emptyset$, we have  the following contradiction.
\begin{align*}
0 \ge \dout{{{M_-}(V)}} - \din{{M_-}(V)} > 0.
\end{align*} 

This finishes the proof of the theorem.
\end{proof}

\section{Examples}

We first provide an example of a $3$-arc-connected digraph $D_1 = (V, A)$ with an arc set $F$ in $A$ such that reversing the arcs of $F$ we obtain a digraph which is $4$-arc-connected but no  arc of $F$ can be reversed without destroying $3$-arc-connectivity. Let us consider the digraph $D_1$ of Figure \ref{fig:lD3}, where a value on an arc means its multiplicity, an arc without value is of multiplicity $1$,  and the arcs of $F$ are in blue. Note that $D_1$ is $3$-arc-connected and, by reversing the blue arcs of $D_1,$ we obtain a digraph $D_1'$ which is $4$-arc-connected. The sets $\Fin$ and $\Fout$ of the proof of Theorem \ref{newmainthm} are represented by red and green circles respectively. These sets show that no blue arc can be reversed without destroying $3$-arc-connectivity. Note that $\lambda(D_1)=3>\lfloor\frac{4+1}{2}\rfloor.$ This example shows that the condition $\lambda(D_1)\le\lfloor\frac{k+1}{2}\rfloor$ in Theorem \ref{newmainthm} can not be deleted.

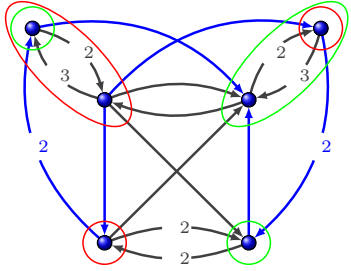
\begin{figure}[!h]
    \centering
    \resizebox{.4\linewidth}{!}{
    \begin{tikzpicture}
        \Vertex[x=0, y=6, size=0.4, style={shading=ball}]{A}
        \Vertex[x=8, y=6, size=0.4, style={shading=ball}]{B}
        \Vertex[x=2, y=4, size=0.4, style={shading=ball}]{C}
        \Vertex[x=6, y=4, size=0.4, style={shading=ball}]{D}
        %\Vertex[x=4, y=2, size=0.4, style={shading=ball}]{E}
        \Vertex[x=2, y=0, size=0.4, style={shading=ball}]{F}
        \Vertex[x=6, y=0, size=0.4, style={shading=ball}]{G}
   
         \Edge[lw=2pt, label=$2$, bend=30, Direct, fontscale=2](A)(C)
        \Edge[lw=2pt, label=$3$, bend=30, Direct, fontscale=2](C)(A)
        \Edge[lw=2pt, label=$3$, bend=30, Direct, fontscale=2](B)(D)
        \Edge[lw=2pt, label=$2$, bend=30, Direct, fontscale=2](D)(B)
        
        \Edge[lw=2pt,  bend=20, Direct, fontscale=2](C)(D)
        \Edge[lw=2pt,  bend=20, Direct, fontscale=2](D)(C)
        
        \Edge[lw=2pt,  bend=30, Direct, fontscale=2, color=blue](C)(B)
        \Edge[lw=2pt,  bend=30, Direct, fontscale=2, color=blue](A)(D)
        \Edge[lw=2pt, label=$2$, bend=30, Direct, fontscale=2, color=blue](F)(A)
        \Edge[lw=2pt, label=$2$, bend=30, Direct, fontscale=2, color=blue](B)(G)

%        \Edge[lw=2pt, label=$2$, bend=30, Direct, fontscale=2](C)(E)
%        \Edge[lw=2pt, label=$1$, bend=30, Direct, fontscale=2](E)(C)
%        \Edge[lw=2pt, label=$1$, bend=30, Direct, fontscale=2](D)(E)
%        \Edge[lw=2pt, label=$2$, bend=30, Direct, fontscale=2](E)(D)
%        
        \Edge[lw=2pt,  Direct, fontscale=2](C)(G)
        \Edge[lw=2pt,  Direct, fontscale=2](F)(D)
        \Edge[lw=2pt,  Direct, fontscale=2, color=blue](C)(F)
        \Edge[lw=2pt,  Direct, fontscale=2, color=blue](G)(D)
 
%        \Edge[lw=2pt, label=$3$, Direct, fontscale=2](F)(E)
%        \Edge[lw=2pt, label=$3$, Direct, fontscale=2](E)(G)
        \Edge[lw=2pt, label=$2$, bend=20, Direct, fontscale=2](G)(F)
        \Edge[lw=2pt, label=$2$, bend=20, Direct, fontscale=2](F)(G)
        
        \draw[color=red, rotate around={45:(1,5)}, very thick](1,5) ellipse (0.9 and 2.3);
        \draw[color=green, rotate around={135:(7,5)}, very thick](7,5) ellipse (0.9 and 2.3);
        
        \draw[color=red, very thick](2,0) circle (0.6);
        \draw[color=red, very thick](8,6) circle (0.6);
        \draw[color=green, very thick](0,6) circle (0.6);
        \draw[color=green, very thick](6,0) circle (0.6);
    
    \end{tikzpicture}
    }
    \caption{Digraph $D_1$ with $\lambda(D_1) = 3$, arcs of $F$ in blue, and $\lambda(D_1') = 4.$}
    \label{fig:lD3}
\end{figure}

Now we provide a more complicated example that shows that the condition $\lambda(D)\le\lfloor\frac{k+1}{2}\rfloor$ in Theorem \ref{newmainthm} can not be replaced by $\lambda(D)\le\lfloor\frac{2k}{3}\rfloor.$ Let us consider the digraph $D_2$ of Figure \ref{fig:lD4}, where as before a value on an arc means its multiplicity, an arc without value is of multiplicity $1$,  and the arcs of $F$ are in blue. Note that $D_2$ is $4$-arc-connected and, by reversing the blue arcs of $D_2,$ we obtain a digraph $D_2'$ which is $6$-arc-connected. The sets $\Fin$ and $\Fout$ of the proof of Theorem \ref{newmainthm} are represented by red and green circles respectively. These sets show that no blue arc can be reversed without destroying $4$-arc-connectivity. Note that $\lambda(D_2)=4=\frac{2\cdot 6}{3}.$
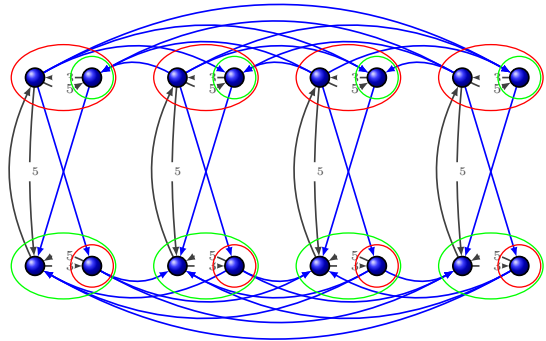
\begin{figure}[!h]
    \centering
    \resizebox{0.6\linewidth}{!}{
    \begin{tikzpicture}
        %==========RED==========
        \draw[color=red, thick](0.6,6) ellipse (1.1 and 0.7);
        \Vertex[x=0, y=6, size=0.4, style={shading=ball}]{Au}
        \Vertex[x=1.2, y=6, size=0.4, style={shading=ball}]{Ac}
        \draw[color=green, thick](1.2,6) circle (0.45);

        \draw[color=red, thick](3.6,6) ellipse (1.1 and 0.7);
        \Vertex[x=3, y=6, size=0.4, style={shading=ball}]{Bu}
        \Vertex[x=4.2, y=6, size=0.4, style={shading=ball}]{Bc}
        \draw[color=green, thick](4.2,6) circle ( 0.45);

        \draw[color=red, thick](6.6,6) ellipse (1.1 and 0.7);
        \Vertex[x=6, y=6, size=0.4, style={shading=ball}]{Cu}
        \Vertex[x=7.2, y=6, size=0.4, style={shading=ball}]{Cc}
        \draw[color=green, thick](7.2,6) circle ( 0.45);

        \draw[color=red, thick](9.6,6) ellipse (1.1 and 0.7);
        \Vertex[x=9, y=6, size=0.4, style={shading=ball}]{Du}
        \Vertex[x=10.2, y=6, size=0.4, style={shading=ball}]{Dc}
        \draw[color=green, thick](10.2,6) circle ( 0.45);

        %==========GREEN==========

        \draw[color=green, thick](0.6,2) ellipse (1.1 and 0.7);
        \Vertex[x=0, y=2, size=0.4, style={shading=ball}]{Eu}
        \Vertex[x=1.2, y=2, size=0.4, style={shading=ball}]{Ec}
        \draw[color=red, thick](1.2,2) circle ( 0.45);

        \draw[color=green, thick](3.6,2) ellipse (1.1 and 0.7);
        \Vertex[x=3, y=2, size=0.4, style={shading=ball}]{Fu}
        \Vertex[x=4.2, y=2, size=0.4, style={shading=ball}]{Fc}
        \draw[color=red, thick](4.2,2) circle ( 0.45);

        \draw[color=green, thick](6.6,2) ellipse (1.1 and 0.7);
        \Vertex[x=6, y=2, size=0.4, style={shading=ball}]{Gu}
        \Vertex[x=7.2, y=2, size=0.4, style={shading=ball}]{Gc}
        \draw[color=red, thick](7.2,2) circle ( 0.45);

        \draw[color=green, thick](9.6,2) ellipse (1.1 and 0.7);
        \Vertex[x=9, y=2, size=0.4, style={shading=ball}]{Hu}
        \Vertex[x=10.2, y=2, size=0.4, style={shading=ball}]{Hc}
        \draw[color=red, thick](10.2,2) circle ( 0.45);

        %=====================ARCS=====================
        \Edge[lw=1pt, Direct, color=blue](Au)(Ec)
        \Edge[lw=1pt, bend=-5, Direct, label=$5$, fontscale=1](Au)(Eu)
        \Edge[lw=1pt, bend=25, Direct,  fontscale=1](Eu)(Au)
        \Edge[lw=1pt, Direct, color=blue](Ac)(Eu)

        \Edge[lw=1pt, Direct, color=blue](Bu)(Fc)
        \Edge[lw=1pt, bend=-5, Direct, label=$5$, fontscale=1](Bu)(Fu)
        \Edge[lw=1pt, bend=25, Direct,  fontscale=1](Fu)(Bu)
        \Edge[lw=1pt, Direct, color=blue](Bc)(Fu)

        \Edge[lw=1pt, Direct, color=blue](Cu)(Gc)
        \Edge[lw=1pt, bend=-5, Direct, label=$5$, fontscale=1](Cu)(Gu)
        \Edge[lw=1pt, bend=25, Direct,  fontscale=1](Gu)(Cu)
        \Edge[lw=1pt, Direct, color=blue](Cc)(Gu)

        \Edge[lw=1pt, Direct, color=blue](Du)(Hc)
        \Edge[lw=1pt, bend=-5, Direct, label=$5$, fontscale=1](Du)(Hu)
        \Edge[lw=1pt, bend=25, Direct,  fontscale=1](Hu)(Du)
        \Edge[lw=1pt, Direct, color=blue](Dc)(Hu)

        \Edge[lw=1pt, bend=30, Direct, color=blue](Au)(Bc)
        \Edge[lw=1pt, bend=30, Direct, color=blue](Au)(Cc)
        \Edge[lw=1pt, bend=30, Direct, color=blue](Au)(Dc)

        \Edge[lw=1pt, bend=-30, Direct, color=blue](Bu)(Ac)
        \Edge[lw=1pt, bend=30, Direct, color=blue](Bu)(Cc)
        \Edge[lw=1pt, bend=30, Direct, color=blue](Bu)(Dc)

        \Edge[lw=1pt, bend=-30, Direct, color=blue](Cu)(Bc)
        \Edge[lw=1pt, bend=-30, Direct, color=blue](Cu)(Ac)
        \Edge[lw=1pt, bend=30, Direct, color=blue](Cu)(Dc)

        \Edge[lw=1pt, bend=-30, Direct, color=blue](Du)(Bc)
        \Edge[lw=1pt, bend=-30, Direct, color=blue](Du)(Cc)
        \Edge[lw=1pt, bend=-30, Direct, color=blue](Du)(Ac)

        \Edge[lw=1pt, bend=-30, Direct, color=blue](Ec)(Fu)
        \Edge[lw=1pt, bend=-30, Direct, color=blue](Ec)(Gu)
        \Edge[lw=1pt, bend=-30, Direct, color=blue](Ec)(Hu)

        \Edge[lw=1pt, bend=30, Direct, color=blue](Fc)(Eu)
        \Edge[lw=1pt, bend=-30, Direct, color=blue](Fc)(Gu)
        \Edge[lw=1pt, bend=-30, Direct, color=blue](Fc)(Hu)

        \Edge[lw=1pt, bend=30, Direct, color=blue](Gc)(Fu)
        \Edge[lw=1pt, bend=30, Direct, color=blue](Gc)(Eu)
        \Edge[lw=1pt, bend=-30, Direct, color=blue](Gc)(Hu)

        \Edge[lw=1pt, bend=30, Direct, color=blue](Hc)(Fu)
        \Edge[lw=1pt, bend=30, Direct, color=blue](Hc)(Gu)
        \Edge[lw=1pt, bend=30, Direct, color=blue](Hc)(Eu)

        \Edge[lw=1pt, label=$3$, Direct, fontscale=1.18 ](Ac)(Au)
        \Edge[lw=1pt, label=$5$, bend=-30, Direct, fontscale=1.18 ](Au)(Ac)

        \Edge[lw=1pt, label=$3$, Direct, fontscale=1.18 ](Bc)(Bu)
        \Edge[lw=1pt, label=$5$, bend=-30, Direct, fontscale=1.18 ](Bu)(Bc)

        \Edge[lw=1pt, label=$3$, Direct, fontscale=1.18 ](Cc)(Cu)
        \Edge[lw=1pt, label=$5$, bend=-30, Direct, fontscale=1.18 ](Cu)(Cc)

        \Edge[lw=1pt, label=$3$, Direct, fontscale=1.18 ](Dc)(Du)
        \Edge[lw=1pt, label=$5$, bend=-30, Direct, fontscale=1.18 ](Du)(Dc)

        \Edge[lw=1pt, label=$3$, Direct, fontscale=1.18 ](Eu)(Ec)
        \Edge[lw=1pt, label=$5$, bend=-30, Direct, fontscale=1.18 ](Ec)(Eu)

        \Edge[lw=1pt, label=$3$, Direct, fontscale=1.18 ](Fu)(Fc)
        \Edge[lw=1pt, label=$5$, bend=-30, Direct, fontscale=1.18 ](Fc)(Fu)

        \Edge[lw=1pt, label=$3$, Direct, fontscale=1.18 ](Gu)(Gc)
        \Edge[lw=1pt, label=$5$, bend=-30, Direct, fontscale=1.18 ](Gc)(Gu)

        \Edge[lw=1pt, label=$3$, Direct, fontscale=1.18 ](Hu)(Hc)
        \Edge[lw=1pt, label=$5$, bend=-30, Direct, fontscale=1.18 ](Hc)(Hu)
    \end{tikzpicture}
    }
    \caption{Digraph $D_2$ with $\lambda(D_2) = 4$, arcs of $F$ in blue, and $\lambda(D_2') = 6.$}
    \label{fig:lD4}
\end{figure}

\end{document}